\DeclarePairedDelimiterX\Set[2]{\lbrace}{\rbrace}%
{ #1 \,:\, #2 } % Set of type {A : B}
\DeclarePairedDelimiterX\inprod[2]{\langle}{\rangle}%
{ #1 , #2 } % Inner product
\newcommand{\bigO}{\mathcal{O}} % Asymptotic notation
\DeclareMathOperator*{\argmin}{arg\,min}  % Argmin
\DeclarePairedDelimiter\floor{\lfloor}{\rfloor} % Floor function
\renewcommand{\epsilon}{\varepsilon}
\renewcommand{\phi}{\varphi}
\newcommand{\R}{\mathbb{R}} % Set of real numbers
\newcommand{\N}{\mathbb{N}} % Set of natural numbers
\DeclareMathOperator{\neper}{e} % Exponential constant
\newcommand{\transpose}{\mathsf{T}}         % Transpose of a matrix
\newcommand{\lspan}{\operatorname{span}}    % linear span
\newcommand{\rev}[1]{#1}
\journalname{BIT}
\newcommand{\hermite}{\mathrm{H}}
\newcommand{\rkhs}{\mathcal{H}}
\newcommand{\xkgh}{\tilde{x}} % Scaled Gauss-Hermite nodes
\newcommand{\wkgh}{\widetilde{w}} % Approximate weights
\newcommand{\textsm}[1]{\text{\tiny{#1}}}
\begin{document}

\title{Gaussian kernel quadrature at scaled Gauss--Hermite nodes\thanks{This work was supported by the Aalto ELEC Doctoral School as well as Academy of Finland projects 266940, 304087, and 313708.
}}

%\titlerunning{Short form of title}        % if too long for running head

\author{Toni Karvonen         \and
        Simo Särkkä
}

%\authorrunning{Short form of author list} % if too long for running head

\institute{Department of Electrical Engineering and Automation, Aalto University, Espoo, Finland \\
\email{toni.karvonen@aalto.fi \& simo.sarkka@aalto.fi}
}

\date{}
% The correct dates will be entered by the editor

\maketitle

\begin{abstract}
This article derives an accurate, explicit, and numerically stable approximation to the kernel quadrature weights in one dimension and on tensor product grids when the kernel and integration measure are Gaussian. The approximation is based on use of scaled Gauss--Hermite nodes and truncation of the Mercer eigendecomposition of the Gaussian kernel. Numerical evidence indicates that both the kernel quadrature and the approximate weights at these nodes are positive. An exponential rate of convergence for functions in the reproducing kernel Hilbert space induced by the Gaussian kernel is proved under an assumption on growth of the sum of absolute values of the approximate weights.

%Include keywords and mathematical subject classification numbers as needed.
\keywords{Numerical integration \and Kernel quadrature \and Gaussian quadrature \and Mercer eigendecomposition}

% \PACS{PACS code1 \and PACS code2 \and more}
\subclass{45C05 \and 46E22 \and 47B32 \and 65D30 \and 65D32}

\end{abstract}

\section{Introduction}

Let $\mu$ be the standard Gaussian measure on $\R$ and $f \colon \R \to \R$ a measurable function. We consider the problem of numerical computation of the integral with respect to $\mu$ of $f$ using a \emph{kernel quadrature rule} (we reserve the term \emph{cubature} for rules on higher dimensions) based on the Gaussian kernel
\begin{equation}\label{eq:Gaussian}
k(x,y) = \exp\bigg( \! -\frac{(x-y)^2}{2\ell^2} \bigg)
\end{equation}
with the length-scale $\ell > 0$. Given any distinct \emph{nodes} $x_1,\ldots,x_N$, the kernel quadrature rule is an approximation of the form
\begin{equation*}
Q_k(f) \coloneqq \sum_{n=1}^N w_{k,n} f(x_n) \approx \mu(f) \coloneqq \frac{1}{\sqrt{2\pi}} \int_\R f(x) \neper^{-x^2/2} \dif x,
\end{equation*}
with its \emph{weights} $w_k = (w_{k,1},\ldots,w_{k,N}) \in \R^N$ solved from the linear system of equations
\begin{equation}\label{eq:linSys}
K w_k = k_\mu,
\end{equation}
where $[K]_{ij} \coloneqq k(x_i,x_j)$ and $[k_\mu]_i \coloneqq \int_\R k(x_i,x) \dif \mu(x)$. This is equivalent to uniquely selecting the weights such that the $N$ kernel translates $k(x_1,\cdot),\ldots,k(x_N,\cdot)$ are integrated exactly by the quadrature rule. Kernel quadrature rules can be interpreted as best quadrature rules in the reproducing kernel Hilbert space (RKHS) induced by a positive-definite kernel~\citep{Larkin1970}, integrated kernel (radial basis function) interpolants~\sloppy{\citep{Bezhaev1991,SommarivaVianello2006}}, and posteriors to $\mu(f)$ under a Gaussian process prior on the integrand~\sloppy{\citep{Larkin1972,OHagan1991,BriolProbInt2018}}.

\rev{Recently, \citet{FasshauerMcCourt2012} have developed a method to circumvent the well-known problem that interpolation with the Gaussian kernel becomes often numerically unstable---in particular when $\ell$ is large---because the condition number of $K$ tends to grow with an exponential rate~\citep{Schaback1995}. They do this by truncating the Mercer eigendecomposition of the Gaussian kernel after $M$ terms and replacing the interpolation basis $\{k(x_n,\cdot)\}_{n=1}^N$ with the first $M$ eigenfunctions. In this article we show that application of this method with $M=N$ to kernel quadrature yields, when the nodes are selected by a suitable and fairly natural scaling of the nodes of the classical Gauss--Hermite quadrature rule, an accurate, explicit, and numerically stable approximation to the Gaussian kernel quadrature weights. Moreover, the proposed nodes appear to be a good and natural choice for the Gaussian kernel quadrature.}

To be precise, \Cref{thm:main} states that the quadrature rule $\widetilde{Q}_k$ that exactly integrates the first $N$ Mercer eigenfunctions of the Gaussian kernel and uses the nodes
\begin{equation*}
\xkgh_n \coloneqq \frac{1}{\sqrt{2} \alpha \beta} x_n^\textsm{GH}
\end{equation*}
has the weights
\begin{equation*}
\widetilde{w}_{k,n} \coloneqq \bigg(\frac{1}{1+2\delta^2}\bigg)^{1/2} w_n^\textsm{GH} \neper^{\delta^2 \xkgh_n^2} \sum_{m=0}^{\floor{(N-1)/2}} \frac{1}{2^m m! } \bigg( \! \frac{2\alpha^2 \beta^2}{1+2\delta^2} - 1 \bigg)^m \hermite_{2m}(x_n^\textsm{GH}),
\end{equation*}
$\widetilde{w}_k = (\widetilde{w}_{k,1}, \ldots, \widetilde{w}_{k,N}) \in \R^N$, where $\alpha$ (for which the value $1/\sqrt{2}$ seems the most natural), $\beta$, and $\delta$ are constants defined in \Cref{eq:constants}, $\hermite_n$ are the probabilists' Hermite polynomials~\eqref{eq:hermite}, and $x_n^\textsm{GH}$ and $w_n^\textsm{GH}$ are the nodes and weights of the $N$-point Gauss--Hermite quadrature rule. 
We argue that these weights are a good approximation to $w_{k}$ and accordingly call them \emph{approximate Gaussian kernel quadrature weights}. Although we derive no bounds for the error of this weight approximation, numerical experiments in \Cref{sec:numsim} indicate that the approximation is accurate and that it appears that $\widetilde{w}_{k} \to w_{k}$ as $N \to \infty$.
In \Cref{sec:tensor} we extend the weight approximation for $d$-dimensional Gaussian tensor product kernel cubature rules of the form
\begin{equation*}
Q_k^d = Q_{k,1} \otimes \cdots \otimes Q_{k,d},
\end{equation*}
where $Q_{k,i}$ are one-dimensional Gaussian kernel quadrature rules. Since each weight of $Q_k^d$ is a product of weights of the univariate rules, an approximation for the tensor product weights is readily available.

\rev{
It turns out that the approximate weight and the associated nodes $\xkgh_n$ have a number of desirable properties:
\begin{itemize}
\item We are not aware of any work on efficient selection of ``good'' nodes in the setting of this article. The Gauss--Hermite nodes~\citep[Section 3]{OHagan1991} and random points~\citep{RasmussenGhahramani2002} are often used, but one should clearly be able to do better, while computation of the optimal nodes~\cite[Section 5.2]{Oettershagen2017} is computationally demanding. As such, given the desirable properties, listed below, of the resulting kernel quadrature rules, the nodes $\xkgh_n$ appear to be an excellent heuristic choice. These nodes also behave naturally when $\ell \to \infty$; see \Cref{sec:lengthscale}.
\item Numerical experiments in \Cref{sec:numsim-positivity} suggest that both $w_{k,n}$ (for the nodes $\xkgh_n$) and $\widetilde{w}_{k,n}$ are positive for any $N \in \N$ and every $n = 1,\ldots, N$. Besides the optimal nodes, the weights for which are guaranteed to be positive when the Gaussian kernel is used~\citep{RichterDyn1971a,Oettershagen2017}, there are no node configurations that give rise to positive weights as far as we are aware of.
\item Numerical experiments in \Cref{sec:numsim-stability,sec:numsim-positivity} demonstrate that computation of the approximate weights is numerically stable. 
Furthermore, construction of these weights only incurs a quadratic computational cost in the number of points, as opposed to the cubic cost of solving $w_k$ from \Cref{eq:linSys}. See \Cref{sec:complexity} for more details.
Note that to obtain a numerically stable method it is not necessary to use the nodes $\xkgh_n$ as the method in \citep{FasshauerMcCourt2012} can be applied in a straightforward manner for any nodes. However, doing so one forgoes a closed form expression and has to use the QR decomposition.
\item In \Cref{sec:convergence,sec:tensor} we show that slow enough growth with $N$ of $\sum_{i=1}^N \abs[0]{\wkgh_{k,n}}$ (numerical evidence indicates this sum converges to one) guarantees that the approximate Gaussian kernel quadrature rule---as well as the corresponding tensor product version---converges with an exponential rate for functions in the RKHS of the Gaussian kernel. Convergence analysis is based on analysis of magnitude of the remainder of the Mercer expansion and rather explicit bounds on Hermite polynomials and their roots. Magnitude of the nodes $\xkgh_n$ is crucial for the analysis; if they were further spread out the proofs would not work as such.
\item We find the connection to the Gauss--Hermite weights and nodes that the closed form expression for $\widetilde{w}_k$ provides intriguing and hope that it can be at some point used to furnish, for example, a rigorous proof of positivity of the approximate weights.
\end{itemize}
}

\section{Approximate weights}\label{sec:main}

This section contains the main results of this article. The main contribution is derivation, in \Cref{thm:main}, of the weights $\widetilde{w}_k$, that can be used to approximate the kernel quadrature weights. We also discuss positivity of these weights, the effect the kernel length-scale $\ell$ is expected to have on quality of the approximation, and computational complexity.

\subsection{Eigendecomposition of the Gaussian kernel}\label{sec:eigendecomposition}

Let $\nu$ be a probability measure on the real line. If the support of $\nu$ is compact, Mercer's theorem guarantees that any positive-definite kernel $k$ admits an absolutely and uniformly convergent eigendecomposition
\begin{equation}\label{eq:eigenDecomp}
k(x,y) = \sum_{n=0}^\infty \lambda_n \phi_n (x) \phi_n(y)
\end{equation}
for positive and non-increasing eigenvalues $\lambda_n$ and eigenfunctions $\phi_n$ that are included in the RKHS $\rkhs$ induced by $k$ and orthonormal in $L^2(\nu)$. Moreover, $\sqrt{\lambda_n} \phi_n$ are $\rkhs$-orthonormal. If the support of $\nu$ is not compact, the expansion~\eqref{eq:eigenDecomp} converges absolutely and uniformly on all compact subsets of $\R \times \R$ under some mild assumptions~\citep{Sun2005,SteinwartScovel2012}. For the Gaussian kernel~\eqref{eq:Gaussian} and measure the eigenvalues and eigenfunctions are available analytically. For a collection of explicit eigendecompositions of some other kernels, see for instance~\citep[Appendix A]{FasshauerMcCourt2015} 

Let $\mu_\alpha$ stand for the Gaussian probability measure,
\begin{equation*}
\dif \mu_\alpha(x) \coloneqq \frac{\alpha}{\sqrt{\pi}} \neper^{-\alpha^2 x^2} \dif x,
\end{equation*}
with variance $1/(2\alpha^2)$ (i.e., $\mu = \mu_{1/\sqrt{2}}\,$) and
\begin{equation}\label{eq:hermite}
\hermite_n(x) \coloneqq (-1)^n \neper^{x^2/2} \od[n]{}{x} \neper^{-x^2/2}
\end{equation}
for the (unnormalised) probabilists' Hermite polynomial satisfying the orthogonality property $\inprod{\hermite_n}{\hermite_m}_{L^2(\mu)} = n! \, \delta_{nm}$. Denote
\begin{equation}\label{eq:constants}
\epsilon = \frac{1}{\sqrt{2}\ell}, \hspace{0.5cm} \beta = \bigg( 1 + \bigg(\frac{2\epsilon}{\alpha}\bigg)^2 \bigg)^{1/4}, \hspace{0.5cm} \text{and} \hspace{0.5cm} \delta^2 = \frac{\alpha^2}{2} (\beta^2 - 1)
\end{equation}
and note that $\beta > 1$ and $\delta^2 > 0$. Then the eigenvalues and $L^2(\mu_\alpha)$-orthonormal eigenfunctions of the Gaussian kernel are~\citep{FasshauerMcCourt2012}
\begin{equation}\label{eq:eigenvalues}
\lambda_n^\alpha \coloneqq \sqrt{\frac{\alpha^2}{\alpha^2 + \delta^2 + \epsilon^2}} \bigg( \frac{\epsilon^2}{\alpha^2 + \delta^2 + \epsilon^2} \bigg)^{n}
\end{equation}
and
\begin{equation}\label{eq:eigenfunctions}
\phi_n^\alpha(x) \coloneqq \sqrt{\frac{\beta}{n!}} \neper^{-\delta^2 x^2} \hermite_{n} \big( \sqrt{2} \alpha \beta x \big).
\end{equation}
See~\citep[Section 12.2.1]{FasshauerMcCourt2015} for verification that these indeed are Mercer eigenfunctions and eigenvalues for the Gaussian kernel. The role of the parameter $\alpha$ is discussed in \Cref{sec:appweights}. The following result, also derivable from Equation 22.13.17 in~\citep{AbramowitzStegun1964}, will be useful.

\begin{lemma}\label{lemma:integral} The eigenfunctions~\eqref{eq:eigenfunctions} of the Gaussian kernel~\eqref{eq:Gaussian} satisfy
\begin{equation*}
\mu(\phi_{2m+1}^\alpha) = 0 \hspace{0.5cm} \text{and} \hspace{0.5cm} \mu(\phi_{2m}^\alpha) = \bigg(\frac{\beta}{1+2\delta^2}\bigg)^{1/2} \frac{\sqrt{(2m)!}}{2^{m} m! } \bigg(\frac{2\alpha^2 \beta^2}{1+2\delta^2} - 1 \bigg)^m
\end{equation*}
for $m \geq 0$.
\end{lemma}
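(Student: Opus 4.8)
The plan is to reduce everything to a single Gaussian integral against a Hermite polynomial and to evaluate that integral by means of the Hermite generating function. Writing out $\mu(\phi_n^\alpha)$ from \eqref{eq:eigenfunctions} and merging the two Gaussian factors $\neper^{-\delta^2 x^2}$ and $\neper^{-x^2/2}$ coming from the eigenfunction and the measure, the problem collapses to computing
\[
I_n \coloneqq \int_\R \neper^{-(1/2 + \delta^2) x^2} \hermite_n\big( \sqrt{2}\alpha\beta\, x \big) \dif x,
\]
so that $\mu(\phi_n^\alpha) = (2\pi)^{-1/2} \sqrt{\beta/n!}\, I_n$. The odd case is then immediate: since $\hermite_{2m+1}$ is an odd polynomial and the Gaussian weight is even, the integrand of $I_{2m+1}$ is odd, whence $I_{2m+1} = 0$ and $\mu(\phi_{2m+1}^\alpha) = 0$.

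For the even case I would exploit the generating function $\sum_{n=0}^\infty (t^n/n!)\hermite_n(y) = \neper^{ty - t^2/2}$. Multiplying by the weight with $y = \sqrt{2}\alpha\beta\, x$ and integrating term by term yields
\[
G(t) \coloneqq \sum_{n=0}^\infty \frac{t^n}{n!} I_n = \neper^{-t^2/2} \int_\R \neper^{-(1/2 + \delta^2) x^2 + \sqrt{2}\alpha\beta\, t x} \dif x.
\]
Completing the square in the exponent reduces the right-hand side to a pure Gaussian integral and leaves
\[
G(t) = \bigg( \frac{2\pi}{1 + 2\delta^2} \bigg)^{1/2} \neper^{\gamma t^2}, \qquad \gamma = \frac{\alpha^2\beta^2}{1 + 2\delta^2} - \frac{1}{2}.
\]
Expanding $\neper^{\gamma t^2} = \sum_{k=0}^\infty (\gamma^k/k!)\, t^{2k}$ and matching the coefficient of $t^{2m}$ against $I_{2m}/(2m)!$ gives
\[
I_{2m} = \bigg( \frac{2\pi}{1 + 2\delta^2} \bigg)^{1/2} \frac{(2m)!}{m!}\, \gamma^m.
\]
Substituting this into $\mu(\phi_{2m}^\alpha) = (2\pi)^{-1/2}\sqrt{\beta/(2m)!}\, I_{2m}$ and writing $\gamma^m = 2^{-m}\big( 2\alpha^2\beta^2/(1 + 2\delta^2) - 1 \big)^m$ reproduces the claimed formula after the $\sqrt{2\pi}$ factors and the factorials cancel.

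The only step that is not pure bookkeeping is the justification of the term-by-term integration defining $G(t)$. I would handle this with the dominated convergence theorem: using the elementary bound $\sum_{n=0}^\infty (s^n/n!)\abs{\hermite_n(y)} \le \neper^{s\abs{y} + s^2/2}$, obtained by replacing every coefficient of $\hermite_n$ with its absolute value, the partial sums of the series are dominated, for each fixed $t$, by the integrable function $\neper^{-(1/2 + \delta^2)x^2}\neper^{\abs{t}\sqrt{2}\alpha\beta\abs{x} + t^2/2}$, whose quadratic decay beats the linear growth in the exponent. Alternatively, since both $G$ and the Gaussian integral on the right are entire in $t$, the claim can simply be read off as an identity of power series. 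I expect the constant-chasing through the definitions in \eqref{eq:constants} to be the most error-prone part, but it is entirely routine.
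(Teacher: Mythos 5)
Your proof is correct, but it takes a genuinely different route from the paper's. The paper works entirely with finite sums: it inserts the explicit expansion $\hermite_{2m}(x) = \frac{(2m)!}{2^m}\sum_{p=0}^m \frac{(-1)^{m-p}}{(2p)!(m-p)!}(\sqrt{2}x)^{2p}$, integrates term by term using the Gaussian moment formula $\int_\R x^{2p}\neper^{-\delta^2 x^2}\dif\mu(x) = \frac{(2p)!}{2^p p!(1+2\delta^2)^{p+1/2}}$, and then recognises the resulting finite sum as the binomial expansion of $\big(2\alpha^2\beta^2/(1+2\delta^2)-1\big)^m$; no limiting process appears, so nothing needs analytic justification. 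You instead sum over all orders at once via the generating function $\sum_{n}(t^n/n!)\hermite_n(y)=\neper^{ty-t^2/2}$, complete one square, and match coefficients of $t^{2m}$; the price is exactly the interchange of the infinite sum with the integral, which you discharge correctly by dominated convergence using the absolute-coefficient bound $\sum_n (s^n/n!)\abs{\hermite_n(y)}\leq \neper^{s\abs{y}+s^2/2}$ --- this is the one step where your argument needs real analysis and the paper's needs none. (Your fallback remark that the identity can ``simply be read off as an identity of power series'' is the only loose point: to identify the Taylor coefficients of the Gaussian integral with $I_n/n!$ you still need either the interchange or differentiation under the integral sign, so it is not an independent shortcut as stated.) What your route buys: a single square-completion replaces the moment formula plus binomial resummation, and the evenness of $G(t)$ in $t$ re-derives $\mu(\phi_{2m+1}^\alpha)=0$ for free; what the paper's buys: a completely elementary, finite computation. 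Your constant-chasing is accurate --- $\gamma^m = 2^{-m}\big(2\alpha^2\beta^2/(1+2\delta^2)-1\big)^m$ lands exactly on the claimed formula.
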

\begin{proof} Since an Hermite polynomial of odd order is an odd function, $\mu(\phi_{2m+1}^\alpha) = 0$. For even indices, use the explicit expression
\begin{equation*}
\hermite_{2m}(x) = \frac{(2m)!}{2^{m}} \sum_{p=0}^m \frac{(-1)^{m-p}}{(2p)!(m-p)!} \big( \sqrt{2} x \big)^{2p},
\end{equation*}
the Gaussian moment formula
\begin{equation*}
\int_\R x^{2p} \neper^{-\delta^2 x^2} \dif \mu(x) = \frac{1}{\sqrt{2\pi}} \int_\R x^{2p} \neper^{-(\delta^2 + 1/2) x^2} \dif x = \frac{(2p)!}{2^p p!(1+2\delta^2)^{p+1/2}},
\end{equation*}
and the binomial theorem to conclude that
\begin{equation*}
\begin{split}
\mu(\phi_{2m}^\alpha) &= \frac{\sqrt{(2m)!\beta}}{2^{m}} \sum_{p=0}^m \frac{(-1)^{m-p}}{(2p)!(m-p)!} (2 \alpha \beta)^{2p} \int_\R x^{2p} \neper^{-\delta^2 x^2} \dif \mu(x) \\
&= \frac{(-1)^m \sqrt{(2m)! \beta}}{2^{m}\sqrt{1+2\delta^2}} \sum_{p=0}^m \frac{1}{p!(m-p)!} \bigg( -\frac{2\alpha^2 \beta^2}{1+2\delta^2} \bigg)^p \\
&= \frac{(-1)^m \sqrt{(2m)! \beta}}{2^{m} m! \sqrt{1+2\delta^2}} \sum_{p=0}^m {m \choose p} \bigg( -\frac{2\alpha^2 \beta^2}{1+2\delta^2} \bigg)^p \\
&= \bigg(\frac{\beta}{1+2\delta^2}\bigg)^{1/2} \frac{\sqrt{(2m)!}}{2^{m} m! } \bigg(\frac{2\alpha^2 \beta^2}{1+2\delta^2} - 1 \bigg)^m.
\end{split}
\end{equation*} \qed
\end{proof}

\subsection{Approximation via QR decomposition}\label{sec:qr}

We begin by outlining a straightforward extension to kernel quadrature of the work of \citet[Chapter 13]{FasshauerMcCourt2012,FasshauerMcCourt2015} on numerically stable kernel interpolation. Recall that the kernel quadrature weights $w_k \in \R^N$ at distinct nodes $x_1,\ldots,x_N$ are solved from the linear system $K w_k = k_\mu$ with \sloppy{${[K]_{ij} = k(x_i,x_j)}$} and \sloppy{${[k_{\mu}]_i = \int_\R k(x_i,x) \dif \mu(x)}$}. Truncation of the eigendecomposition~\eqref{eq:eigenDecomp} after $M \geq N$ terms\footnote{\rev{Low-rank approximations (i.e.\ $M < N$) are also possible~\citep[Section 6.1]{FasshauerMcCourt2012}.}} yields the approximations $K \approx \Phi \Lambda \Phi^\transpose$ and $k_\mu \approx \Phi \Lambda \phi_\mu$, where \sloppy{${[\Phi]_{ij} \coloneqq \phi_{j-1}^\alpha(x_i)}$} is an $N \times M$ matrix, the diagonal $M \times M$ matrix $[\Lambda]_{ii} \coloneqq \lambda_{i-1}$ contains the eigenvalues in appropriate order, and \sloppy{${[\phi_\mu]_i \coloneqq \mu(\phi_{i-1})}$} is an $M$-vector. The kernel quadrature weights $w_k$ can be therefore approximated by
\begin{equation}\label{eq:Mweights}
\widetilde{w}_k^M \coloneqq \big( \Phi \Lambda \Phi^\transpose \big)^{-1} \Phi \Lambda \phi_\mu.
\end{equation}

\Cref{eq:Mweights} can be written in a more convenient form by exploiting the QR decomposition. The QR decomposition of $\Phi$ is
\begin{equation*}
\Phi = QR \coloneqq Q \begin{bmatrix} R_1 & R_2 \end{bmatrix}
\end{equation*}
for a unitary $Q \in \R^{N \times N}$, an upper triangular $R_1 \in \R^{N \times N}$, and $R_2 \in \R^{N \times (M-N)}$. Consequently, 
\begin{equation*}
\widetilde{w}_k^M = \big( QR \Lambda R^\transpose Q^\transpose \big)^{-1} QR \Lambda \phi_\mu = Q \big( R \Lambda R^\transpose \big)^{-1} R \Lambda \phi_\mu.
\end{equation*}
The decomposition
\begin{equation*}
\Lambda = \begin{bmatrix} \Lambda_1 & 0 \\ 0 & \Lambda_2 \end{bmatrix}
\end{equation*}
of $\Lambda \in \R^{M \times M}$ into diagonal $\Lambda_1 \in \R^{N \times N}$ and $\Lambda_2 \in \R^{(M-N) \times (M-N)}$ allows for writing
\begin{equation*}
R \Lambda R^\transpose = R_1 \Lambda_1 \big( R_1^\transpose + \Lambda_1^{-1} R_1^{-1} R_2 \Lambda_2 R_2^\transpose \big).
\end{equation*}
Therefore,
\begin{equation}\label{eq:MweightsFinal}
\widetilde{w}_k^M = Q \big( R_1^\transpose + \Lambda_1^{-1} R_1^{-1} R_2 \Lambda_2 R_2^\transpose \big)^{-1} \begin{bmatrix} I_N & \Lambda_1^{-1} R_1^{-1} R_2 \Lambda_2 \end{bmatrix} \phi_\mu,
\end{equation}
where $I_N$ is the $N \times N$ identity matrix. \rev{If $\epsilon^2/(\alpha^2+\delta^2+\epsilon^2)$ is small (i.e., $\ell$ is large), numerical ill-conditioning in \Cref{eq:MweightsFinal} for the Gaussian kernel is associated with the diagonal matrices $\Lambda_1^{-1}$ and $\Lambda_2$.}
Consequently, numerical stability can be significantly improved by performing the multiplications by these matrices in the terms $\Lambda_1^{-1} R_1^{-1} R_2 \Lambda_2 R_2^\transpose$ and $\Lambda_1^{-1} R_1^{-1} R_2 \Lambda_2$ analytically; \rev{see \citep[Sections 4.1 and 4.2]{FasshauerMcCourt2012} for more details}. 

Unfortunately, using the QR decomposition does not provide an attractive closed form solution for the approximate weights $\widetilde{w}_k^M$ \rev{for general $M$}. Setting $M = N$ turns $\Phi$ into a square matrix, enabling its direct inversion and formation of an \rev{explicit} connection to the classical Gauss--Hermite quadrature. \rev{The rest of the article is concerned with this special case.}

\subsection{Gauss--Hermite quadrature}

Given a measure $\nu$ on $\R$, the $N$-point \emph{Gaussian quadrature rule} is the unique $N$-point quadrature rule that is exact for all polynomials of degree at most $2N-1$. We are interested in \emph{Gauss--Hermite quadrature rules} that are Gaussian rules for the Gaussian measure~$\mu$:
\begin{equation*}
\sum_{n=1}^N w_n^\textsm{GH} p(x_n^\textsm{GH}) = \mu(p)
\end{equation*}
for every polynomial $p \colon \R \to \R$ with $\deg p \leq 2N-1$. The nodes $x_1^\textsm{GH}, \ldots x_N^\textsm{GH}$ are the roots of the $N$th Hermite polynomial $\hermite_N$ and the weights $w_1^\textsm{GH}, \ldots, w_N^\textsm{GH}$ are positive and sum to one. The nodes and the weights are related to the eigenvalues and eigenvectors of the tridiagonal Jacobi matrix formed out of three-term recurrence relation coefficients of normalised Hermite polynomials~\citep[Theorem 3.1]{Gautschi2004}.

We make use of the following theorem, a one-dimensional special case of a more general result due to \citet{Mysovskikh1968}. See also~\citep[Section 7]{Cools1997}. This result also follows from the Christoffel--Darboux formula~\eqref{eq:christoffelDarboux}.

\begin{theorem}\label{thm:Mysovskikh} Let $\nu$ be a measure on $\R$. Suppose that $x_1,\ldots,x_N$ and $w_1,\ldots,w_N$ are the nodes and weights of the unique Gaussian quadrature rule. Let $p_0,\ldots,p_{N-1}$ be the $L^2(\nu)$-orthonormal polynomials. Then the matrix $[P]_{ij} \coloneqq \sum_{n=0}^{N-1} p_n(x_i) p_n(x_j)$ is diagonal and has the diagonal elements $[P]_{ii} = 1/w_i$.
\end{theorem}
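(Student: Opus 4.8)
The plan is to recast the statement as a single matrix identity and read it off from the exactness of the Gaussian quadrature rule together with the $L^2(\nu)$-orthonormality of the polynomials. First I would introduce the matrix $\Phi \in \R^{N \times N}$ with entries $[\Phi]_{ni} \coloneqq p_{n-1}(x_i)$ for $n,i \in \{1,\ldots,N\}$, so that $[\Phi^\transpose \Phi]_{ij} = \sum_{n=1}^N p_{n-1}(x_i) p_{n-1}(x_j) = [P]_{ij}$; that is, $P = \Phi^\transpose \Phi$. With $W \coloneqq \diag(w_1,\ldots,w_N)$, which is invertible because the weights are positive, the claim becomes the assertion that $\Phi^\transpose \Phi = W^{-1}$.

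The key step is to compute $\Phi W \Phi^\transpose$ instead. Its $(n,m)$ entry is $\sum_{i=1}^N w_i\, p_{n-1}(x_i)\, p_{m-1}(x_i)$, which is exactly the quadrature rule applied to the polynomial $p_{n-1}p_{m-1}$. Since $\deg(p_{n-1}p_{m-1}) \le 2N-2 \le 2N-1$, the rule integrates this product exactly, so the entry equals $\inprod{p_{n-1}}{p_{m-1}}_{L^2(\nu)} = \delta_{nm}$ by orthonormality. Hence $\Phi W \Phi^\transpose = I_N$.

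It then remains to convert this into a statement about $\Phi^\transpose \Phi$. Taking determinants in $\Phi W \Phi^\transpose = I_N$ gives $(\det \Phi)^2 \det W = 1$, so $\Phi$ is invertible; multiplying on the left by $\Phi^{-1}$ and on the right by $(\Phi^\transpose)^{-1}$ yields $W = (\Phi^\transpose \Phi)^{-1}$, i.e.\ $P = \Phi^\transpose \Phi = W^{-1}$, which is diagonal with $[P]_{ii} = 1/w_i$. This settles both the diagonality and the value of the diagonal entries simultaneously.

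I do not anticipate a serious obstacle: once the reformulation $P = \Phi^\transpose \Phi$ is in place the argument is essentially bookkeeping, and the only point needing care is the passage from $\Phi W \Phi^\transpose = I_N$ to $\Phi^\transpose \Phi = W^{-1}$, which uses that $\Phi$ is square and invertible (equivalently, that the distinct nodes make the evaluation functionals independent). As the excerpt notes, one could alternatively argue straight from the Christoffel--Darboux formula: the off-diagonal entries $[P]_{ij}$ with $i \ne j$ vanish because each $x_i$ is a root of the degree-$N$ orthogonal polynomial, and the diagonal values follow from the confluent form of the formula combined with the standard expression for the Christoffel numbers. I would nonetheless keep the matrix proof as the main route, since it is self-contained and uses only exactness and orthonormality.
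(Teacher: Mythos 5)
Your proof is correct, but it does not follow the paper's route, because the paper in fact gives no proof of this theorem at all: it defers to \citet{Mysovskikh1968} (see also \citet{Cools1997}) and merely remarks that the statement also follows from the Christoffel--Darboux formula~\eqref{eq:christoffelDarboux}. Your matrix argument is a genuinely different, self-contained alternative. Setting $P = \Phi^\transpose \Phi$ and verifying $\Phi W \Phi^\transpose = I_N$ uses nothing beyond exactness of the rule on polynomials of degree at most $2N-2$ and the $L^2(\nu)$-orthonormality of $p_0,\ldots,p_{N-1}$; in particular, you never invoke the fact that the nodes are the roots of the degree-$N$ orthogonal polynomial, which is precisely what the Christoffel--Darboux route leans on (off the diagonal the numerator vanishes because $p_N(x_i) = p_N(x_j) = 0$, and the diagonal requires the confluent form of the formula together with the classical expression for the Christoffel numbers). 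A pleasant byproduct of your identity is that it re-proves positivity of the Gaussian weights: since $W^{-1} = \Phi^\transpose \Phi$ is symmetric positive definite, each $[P]_{ii} = 1/w_i$ is strictly positive. This makes your opening appeal to positivity (to invert $W$) dispensable, and indeed slightly redundant: establishing $\Phi W \Phi^\transpose = I_N$ requires no invertibility of $W$, and your own determinant step $(\det \Phi)^2 \det W = 1$ then shows that both $\Phi$ and $W$ are invertible without any a priori sign information. In short, the paper's suggested derivation trades on special properties of Gaussian nodes and orthogonal polynomials, while yours is elementary linear algebra plus exactness, and would apply verbatim to any $N$-point rule with distinct nodes that is exact to degree $2N-2$.
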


\subsection{Approximate weights at scaled Gauss--Hermite nodes}\label{sec:appweights}

Let us now consider the approximate weights~\eqref{eq:Mweights} with $M = N$. Assuming that $\Phi$ is invertible, we then have
\begin{equation*}
w_k \approx \widetilde{w}_k \coloneqq \widetilde{w}_k^N = \big( \Phi \Lambda \Phi^\transpose \big)^{-1} \Phi \Lambda \phi_\mu = \Phi^{-\transpose} \phi_\mu.
\end{equation*}
\rev{Note that the exponentially decaying Mercer eigenvalues, a major source of numerical instability, do not appear in the equation for $\widetilde{w}_k$.} The weights $\widetilde{w}_k$ are those of the unique quadrature rule that is exact for the $N$ first eigenfunctions $\phi_0^\alpha,\ldots,\phi_{N-1}^\alpha$.
For the Gaussian kernel, we are in a position to do much more. Recalling the form of the eigenfunctions in \Cref{eq:eigenfunctions}, we can write $\Phi = \sqrt{\beta} E^{-1} V$ for the diagonal matrix $[E]_{ii} \coloneqq \neper^{\delta^2 x_i^2}$ and the Vandermonde matrix 
\begin{equation}\label{eq:Vmatrix}
[V]_{ij} \coloneqq \frac{1}{\sqrt{(j-1)!}} \hermite_{j-1} \big( \sqrt{2}\alpha\beta x_i \big)
\end{equation}
of scaled and normalised Hermite polynomials. From this it is evident that $\Phi$ is invertible---which is just a manifestation of the fact that the eigenfunctions of a totally positive kernel constitute a Chebyshev system~\citep{Kellog1918,Pinkus1996}. Consequently,
\begin{equation*}
\widetilde{w}_k = \frac{1}{\sqrt{\beta}} E V^{-\transpose} \phi_\mu.
\end{equation*}
Select the nodes
\begin{equation*}
\xkgh_n \coloneqq \frac{1}{\sqrt{2} \alpha \beta} x_n^\textsm{GH}.
\end{equation*}
Then the matrix $V$ defined in \Cref{eq:Vmatrix} is precisely the Vandermonde matrix of the normalised Hermite polynomials and $V V^\transpose$ is the matrix $P$ of \Cref{thm:Mysovskikh}. Let $W_\textsm{GH}$ be the diagonal matrix containing the Gauss--Hermite weights. It follows that $V^{-\transpose} = W_\textsm{GH} V$ and
\begin{equation}\label{eq:wa1st}
\widetilde{w}_k = \frac{1}{\sqrt{\beta}} E V^{-\transpose} \phi_\mu = \frac{1}{\sqrt{\beta}} E W_\textsm{GH} V \phi_\mu.
\end{equation}
Combining this equation with \Cref{lemma:integral}, we obtain the main result of this article.

\begin{theorem}\label{thm:main} Let $x_1^\textsm{GH}, \ldots, x_N^\textsm{GH}$ and $w_1^\textsm{GH}, \ldots, w_N^\textsm{GH}$ stand for the nodes and weights of the $N$-point Gauss--Hermite quadrature rule. Define the nodes
\begin{equation}\label{eq:nodes}
\xkgh_n = \frac{1}{\sqrt{2} \alpha \beta} x_n^\textsm{GH}.
\end{equation}
Then the weights $ \wkgh_{k} \in \R^N$ of the $N$-point quadrature rule
\begin{equation*}
\widetilde{Q}_k(f) \coloneqq \sum_{n=1}^N \wkgh_{k,n} f(\xkgh_n),
\end{equation*}
defined by the exactness conditions $\widetilde{Q}_k(\phi_n^\alpha) = \mu_\alpha(\phi_n^\alpha)$ for $n = 0,\ldots,N-1$, are
\begin{equation}\label{eq:approximation}
\widetilde{w}_{k,n} = \bigg(\frac{1}{1+2\delta^2}\bigg)^{1/2} w_n^\textsm{GH} \neper^{\delta^2 \xkgh_n^2} \sum_{m=0}^{\floor{(N-1)/2}} \frac{1}{2^m m! } \bigg( \! \frac{2\alpha^2 \beta^2}{1+2\delta^2} - 1 \bigg)^m \hermite_{2m}(x_n^\textsm{GH}),
\end{equation}
where $\alpha$, $\beta$, and $\delta$ are defined in \Cref{eq:constants} and $\hermite_{2m}$ are the probabilists' Hermite polynomials~\eqref{eq:hermite}. 
\end{theorem}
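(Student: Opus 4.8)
The plan is to start from the identity \eqref{eq:wa1st}, namely $\widetilde{w}_k = \beta^{-1/2} E W_\textsm{GH} V \phi_\mu$, which the preceding discussion has already established for the scaled nodes \eqref{eq:nodes}. All of the structural work is in fact contained in this identity: the factorisation $\Phi = \sqrt{\beta}\, E^{-1} V$ coming from the closed form \eqref{eq:eigenfunctions} of the eigenfunctions, the observation that the node choice $\xkgh_n = x_n^\textsm{GH}/(\sqrt{2}\alpha\beta)$ turns $V$ into the Vandermonde matrix of the $L^2(\mu)$-orthonormal Hermite polynomials evaluated at the Gauss--Hermite nodes, and the consequent inversion $V^{-\transpose} = W_\textsm{GH} V$ supplied by \Cref{thm:Mysovskikh}. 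Given \eqref{eq:wa1st}, the theorem therefore reduces to a componentwise evaluation, and the remaining work is purely a matter of substituting \Cref{lemma:integral} and simplifying.

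Concretely, I would read off the $n$th entry of \eqref{eq:wa1st}. Since $E$ and $W_\textsm{GH}$ are diagonal, the $n$th component pulls out the scalar factors $[E]_{nn} = \neper^{\delta^2 \xkgh_n^2}$ and $[W_\textsm{GH}]_{nn} = w_n^\textsm{GH}$, leaving the inner product of the $n$th row of $V$ with $\phi_\mu$,
\begin{equation*}
\widetilde{w}_{k,n} = \frac{1}{\sqrt{\beta}} \, \neper^{\delta^2 \xkgh_n^2} w_n^\textsm{GH} \sum_{l=0}^{N-1} \frac{1}{\sqrt{l!}} \hermite_l(x_n^\textsm{GH}) \, \mu(\phi_l^\alpha),
\end{equation*}
where I have used $[V]_{n,l+1} = \hermite_l(x_n^\textsm{GH})/\sqrt{l!}$ together with $[\phi_\mu]_{l+1} = \mu(\phi_l^\alpha)$ and re-indexed the sum to run over eigenfunction orders $l = 0,\ldots,N-1$.

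Next I would invoke \Cref{lemma:integral}. Because $\mu(\phi_{2m+1}^\alpha) = 0$, every odd term in the sum vanishes, so only the even orders $l = 2m$ survive, with $m$ ranging from $0$ to $\floor{(N-1)/2}$. Substituting the explicit value of $\mu(\phi_{2m}^\alpha)$, the factor $\sqrt{(2m)!}$ it contributes cancels exactly against the normalising $1/\sqrt{(2m)!}$ carried by the corresponding column of $V$, while the two stray powers of $\beta$ combine as $\beta^{-1/2}\cdot\beta^{1/2} = 1$ and leave the prefactor $(1+2\delta^2)^{-1/2}$. Collecting the surviving terms yields precisely \eqref{eq:approximation}.

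As for difficulty, I do not expect a genuine obstacle once \eqref{eq:wa1st} is in hand; the only thing demanding care is the index bookkeeping, that is, matching the zero-based eigenfunction orders of $\phi_n^\alpha$ and $\hermite_n$ against the one-based rows and columns of $V$ and $\phi_\mu$, and confirming that the even orders available for $N$ nodes terminate exactly at $m = \floor{(N-1)/2}$. The one conceptual point worth underlining is that the entire reduction hinges on \Cref{thm:Mysovskikh}: the scaled nodes \eqref{eq:nodes} are engineered so that $V V^\transpose = W_\textsm{GH}^{-1}$, and it is this identity that converts the opaque inverse $\Phi^{-\transpose}$ into the explicit product $W_\textsm{GH} V$ and makes a closed form possible at all.
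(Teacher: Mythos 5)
Your proposal is correct and follows essentially the same route as the paper: the paper's proof of \Cref{thm:main} consists precisely of combining \Cref{eq:wa1st} with \Cref{lemma:integral}, and your componentwise evaluation---the vanishing of odd-order terms, the cancellation of $\sqrt{(2m)!}$ against the column normalisation of $V$, the merging of the $\beta^{-1/2}$ and $\beta^{1/2}$ factors, and the termination of the sum at $m = \floor{(N-1)/2}$---is exactly the computation the paper leaves implicit. Your closing observation that the whole reduction rests on \Cref{thm:Mysovskikh} turning $V^{-\transpose}$ into $W_\textsm{GH} V$ at the scaled nodes likewise mirrors the paper's own derivation.
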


Since the weights $\wkgh_{k}$ are obtained by truncating of the Mercer expansion of $k$, it is to be expected that $\wkgh_k \approx w_k$. This motivates our calling of these weights the \emph{approximate Gaussian kernel quadrature weights}. We do not provide theoretical results on quality of this approximation, but the numerical experiments in \Cref{sec:numsim-approximation} indicate that the approximation is accurate and that its accuracy increases with $N$. See~\citep{FasshauerMcCourt2012} for related experiments.

An alternative non-analytical formula for the approximate weights can be derived using the Christoffel--Darboux formula~\citep[Section 1.3.3]{Gautschi2004}
\begin{equation}\label{eq:christoffelDarboux}
\sum_{m=0}^M \frac{\hermite_m(x)\hermite_m(y)}{m!} = \frac{\hermite_M(y) \hermite_{M+1}(x) - \hermite_M(x) \hermite_{M+1}(y)}{M! (x-y)}.
\end{equation}
From~\Cref{eq:wa1st} we then obtain (keep in mind that $x_1^\textsm{GH},\ldots,x_N^\textsm{GH}$ are the roots of~$\hermite_N$)
\begin{equation*}
\begin{split}
\widetilde{w}_{k,n} &= \frac{1}{\sqrt{\beta}} w_n^\textsm{GH} \neper^{\delta^2 \xkgh_n^2} \sum_{m=0}^{N-1} \frac{1}{\sqrt{m!}} \hermite_m(x_n^\textsm{GH}) \mu(\phi_m^\alpha) \\
&= w_n^\textsm{GH} \neper^{\delta^2 \xkgh_n^2} \int_\R \neper^{-\delta^2 x^2} \sum_{m=0}^{N-1} \frac{\hermite_m(x_n^\textsm{GH}) \hermite_m(\sqrt{2}\alpha \beta x)}{m!} \dif \mu(x) \\
&= \frac{w_n^\textsm{GH} \neper^{\delta^2 \xkgh_n^2} \hermite_{N-1}(x_n^\textsm{GH})}{\sqrt{2\pi} (N-1)!} \int_\R \frac{\hermite_N(\sqrt{2}\alpha \beta x) }{\sqrt{2}\alpha \beta x-x_n^\textsm{GH}} \neper^{-(\delta^2 + 1/2) x^2} \dif x \\
&= \frac{w_n^\textsm{GH} \neper^{\delta^2 \xkgh_n^2} \hermite_{N-1}(x_n^\textsm{GH})}{2\sqrt{\pi}\alpha\beta(N-1)!} \int_\R \frac{\hermite_N(x) }{x-x_n^\textsm{GH}} \exp\bigg( -\frac{\delta^2 + 1/2}{2\alpha^2\beta^2} x^2 \bigg) \dif x.
\end{split}
\end{equation*}
This formula is analogous to the formula
\begin{equation*}
w_n^\textsm{GH} = \frac{1}{\sqrt{2\pi} N \hermite_{N-1}(x_n^\textsm{GH})} \int_\R \frac{\hermite_N(x)}{x-x_n^\textsm{GH}} \neper^{-x^2/2} \dif x
\end{equation*}
for the Gauss--Hermite weights. Plugging this in, we get
\begin{equation*}
\widetilde{w}_{k,n} = \frac{\neper^{\delta^2 \xkgh_n^2}}{2\sqrt{2} \pi \alpha \beta N!} \int_\R \frac{\hermite_N(x)}{x-x_n^\textsm{GH}} \neper^{-x^2/2} \dif x \int_\R \frac{\hermite_N(x) }{x-x_n^\textsm{GH}} \exp\bigg( -\frac{\delta^2 + 1/2}{2\alpha^2\beta^2} x^2 \bigg) \dif x.
\end{equation*}

It appears that both $w_{k,n}$ and $\widetilde{w}_{k,n}$ of \Cref{thm:main} are positive for many choices of~$\alpha$; see \Cref{sec:numsim-positivity} for experiments involving $\alpha = 1/\sqrt{2}$. Unfortunately, we have not been able to prove this. In fact, numerical evidence indicates something slightly stronger. Namely that the even polynomial
\begin{equation*}
R_{\gamma,N}(x) \coloneqq \sum_{m=0}^{\floor{(N-1)/2}} \frac{\gamma^m}{2^m m! } \hermite_{2m}(x)
\end{equation*}
of degree $2\floor{(N-1)/2}$ is positive for every $N \geq 1$ and (at least) every $0 < \gamma \leq 1$. This would imply positivity of $\widetilde{w}_{k,n}$ since the Gauss--Hermite weights $w_n^\textsm{GH}$ are positive. For example, with $\alpha = 1/\sqrt{2}$,
\begin{equation*}
\frac{2\alpha^2 \beta^2}{1+2\delta^2} - 1 = \frac{2\sqrt{1+8\epsilon^2}}{1+\sqrt{1+8\epsilon^2}}
- 1 = \frac{\sqrt{1+8\epsilon^2}-1}{1+\sqrt{1+8\epsilon^2}} \in (0, 1).
\end{equation*}

As discussed in~\citep{FasshauerMcCourt2012} in the context of kernel interpolation, the parameter $\alpha$ acts as a global scale parameter. While in interpolation it is not entirely clear how this parameter should be selected, in quadrature it seems natural to set $\alpha = 1/\sqrt{2}$ so that the eigenfunctions are orthonormal in $L^2(\mu)$. This is the value that we use, though also other values are potentially of interest since $\alpha$ can be used to control the spread of the nodes independently of the length-scale $\ell$. In \Cref{sec:convergence}, we also see that this value leads to more natural convergence analysis.

\subsection{Effect of the length-scale}\label{sec:lengthscale}

Roughly speaking, magnitude of the eigenvalues
\begin{equation*}
\lambda_n^\alpha = \sqrt{\frac{\alpha^2}{\alpha^2 + \delta^2 + \epsilon^2}} \bigg( \frac{\epsilon^2}{\alpha^2 + \delta^2 + \epsilon^2} \bigg)^{n}
\end{equation*}
determines how many eigenfunctions are necessary for an accurate weight approximation. We therefore expect that the approximation~\eqref{eq:approximation} is less accurate when the length-scale $\ell$ is small (i.e., $\epsilon = 1/(\sqrt{2}\ell)$ is large). This is confirmed by the numerical experiments in \Cref{sec:numsim}. 

Consider then the case $\ell \to \infty$. This scenario is called the \emph{flat limit} in scattered data approximation literature where it has been proved\footnote{It is interesting to note that the first published observation of analogous phenomenon is, as far as we are aware of, due to~\citet[Section 3.3]{OHagan1991} in kernel quadrature literature, predating the work of \citet{DriscollFornberg2002}. See also~\citep{Minka2000} for early quadrature-related work on the topic.} that the kernel interpolant associated to an isotropic kernel with increasing length-scale converges to (i) the unique polynomial interpolant of degree $N-1$ to the data if the kernel is infinitely smooth~\citep{LarssonFornberg2005,Schaback2005,LeeYoonYoon2007} or (ii) to a polyharmonic spline interpolant if the kernel is of finite smoothness~\citep{LeeMicchelliYoon2014}. In our case, $\ell \to \infty$ results in
\begin{equation*}
\epsilon \to 0, \hspace{0.5cm} \beta \to 1, \hspace{0.5cm} \delta^2 \to 0, \hspace{0.5cm} \lambda_n^\alpha \to 0, \hspace{0.5cm} \text{and} \hspace{0.5cm} \phi_n^\alpha(x) \to \hermite_n \big(\sqrt{2} \alpha x \big).
\end{equation*}
If the nodes are selected as in \Cref{eq:nodes}, $\xkgh_n \to x_n^\textsm{GH}/(\sqrt{2}\alpha)$. That is, if $\alpha = 1/\sqrt{2}$
\begin{equation*}
\phi_n^\alpha(x) \to \hermite_n(x), \hspace{0.5cm} \xkgh_n \to x_n^\textsm{GH}, \hspace{0.5cm} \text{and} \hspace{0.5cm} \widetilde{w}_{k,n} \to w_n^\textsm{GH}.
\end{equation*}
That the approximate weights convergence to the Gauss--Hermite ones can be seen, for example, from \Cref{eq:approximation} by noting that only the first term in the sum is retained at the limit. Based on the aforementioned results regarding convergence of kernel interpolants to polynomial ones at the flat limit, it is to be expected that also $w_{k,n} \to w_n^\textsm{GH}$ as $\ell \to \infty$ (we do not attempt to prove this). Because the Gauss--Hermite quadrature rule is the ``best'' for polynomials and kernel interpolants convergence to polynomials at the flat limit, the above observation provides another justification for the choice $\alpha = 1/\sqrt{2}$ that we proposed the preceding section.

When it comes to node placement, the length-scale is having an intuitive effect if the nodes are selected according to \Cref{eq:nodes}. For small $\ell$, the nodes are placed closer to the origin where most of the measure is concentrated as integrands are expected to converge quickly to zero as $\abs[0]{x} \to \infty$, whereas for larger $\ell$ the nodes are more---but not unlimitedly---spread out in order to capture behaviour of functions that potentially contribute to the integral also further away from the origin.

\subsection{On computational complexity} \label{sec:complexity}

Because the Gauss--Hermite nodes and weights are related to the eigenvalues and eigenvectors of the tridiagonal Jacobi matrix~\citep[Theorem 3.1]{Gautschi2004} they\rev{---and the points $\tilde{x}_n$---}can be solved in quadratic time \rev{(in practice, these nodes and weights can be often tabulated beforehand)}.
From \Cref{eq:approximation} it is seen that computation of each approximate weight is linear in $N$: there are approximately $(N-1)/2$ terms in the sum and the Hermite polynomials can be evaluated on the fly using the three-term recurrence formula \rev{$\hermite_{n+1}(x) = x \hermite_n(x) - n \hermite_{n-1}(x)$}. That is, computational cost of obtaining $\tilde{x}_n$ and $\widetilde{w}_{k,n}$ for $n=1,\ldots,N$ is \rev{quadratic} in $N$. Since the kernel matrix $K$ of the Gaussian kernel is dense, solving the exact kernel quadrature weights from the linear system~\eqref{eq:linSys} \rev{for the points $\tilde{x}_n$} incurs a \rev{more demanding} cubic computational cost.
Because computational cost of a tensor product rule does not depend on the nodes and weights after these have been computed, the above discussion also applies to the rules presented in \Cref{sec:tensor}.

\section{Convergence analysis}\label{sec:convergence}

In this section we analyse convergence in the reproducing kernel Hilbert space $\rkhs \subset C^\infty(\R)$ induced by the Gaussian kernel of quadrature rules that are exact for the Mercer eigenfunctions. First, we prove a generic result (\Cref{thm:convergence}) to this effect and then apply this to the quadrature rule with the nodes $\xkgh_n$ and weights $\widetilde{w}_{k,n}$. If $\sum_{n=1}^N \abs[0]{\widetilde{w}_{k,n}}$ does not grow too fast with $N$, we obtain exponential convergence rates. 

Recall some basic facts about reproducing kernel Hilbert spaces spaces~\citep{BerlinetThomasAgnan2004}: (i) $\inprod{f}{k(x,\cdot)}_\rkhs = f(x)$ for any $f \in \rkhs$ and $x \in \R$ and (ii) $f = \sum_{n=0}^\infty \lambda_n^\alpha \inprod{f}{\phi_n^\alpha} \phi_n^\alpha$ for any $f \in \rkhs$. The \emph{worst-case error} $e(Q)$ of a quadrature rule $Q(f) = \sum_{n=1}^N w_n f(x_n)$ is 
\begin{equation*}
e(Q) \coloneqq \sup_{\norm[0]{f}_\rkhs \leq 1} \abs[0]{\mu(f) - Q(f)}.
\end{equation*}
Crucially, the worst-case error satisfies
\begin{equation*}
\abs[0]{\mu(f) - Q(f)} \leq \norm[0]{f}_\rkhs e(Q)
\end{equation*}
for any $f \in \rkhs$. This justifies calling a sequence $\{Q_N\}_{N=1}^\infty$ of $N$-point quadrature rules \emph{convergent} if $e(Q_N) \to 0$ as $N \to \infty$. For given nodes $x_1,\ldots,x_N$, the weights $w_k = (w_{k,1},\ldots,w_{k,N})$ of the kernel quadrature rule $Q_k$ are unique minimisers of the worst-case error:
\begin{equation*}
w_k = \argmin_{w \in \R^N} \sup_{\norm[0]{f}_\rkhs \leq 1} \, \abs[3]{ \int_\R f \dif \mu - \sum_{n=1}^N w_i f(x_i) }.
\end{equation*}
It follows that a rate of convergence to zero for $e(Q)$ also applies to $e(Q_k)$.

A number of convergence results for kernel quadrature rules on compact spaces appear in~\citep{Bezhaev1991,KanagawaSriperumbudurFukumizu2017,BriolProbInt2018}. When it comes to the RKHS of the Gaussian kernel, characterised in~\citep{Steinwart2006,Minh2010}, \citet{KuoWozniakowski2012} have analysed convergence of the Gauss--Hermite quadrature rule. Unfortunately, it turns out that the Gauss--Hermite rule converges in this space if and only if $\epsilon^2 < 1/2$. Consequently, we believe that the analysis below is the first to establish convergence, under the assumption (supported by our numerical experiments) that the sum of $\abs[0]{\widetilde{w}_{k,n}}$ does not grow too fast, of an explicitly constructed sequence of quadrature rules in the RKHS of the Gaussian kernel with any value of the length-scale parameter. We begin with two simple lemmas.

\begin{lemma}\label{lemma:eigFuncBound} The eigenfunctions $\phi_n^\alpha$ admit the bound
\begin{equation*}
\sup_{n \geq 0} \, \abs[0]{ \phi_n^\alpha(x) } \leq K \sqrt{\beta} \neper^{\alpha^2 x^2/2}
\end{equation*}
for a constant $K \leq 1.087$ and every $x \in \R$.
\end{lemma}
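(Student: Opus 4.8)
The plan is to reduce the claimed uniform bound, by way of the explicit formula~\eqref{eq:eigenfunctions}, to the classical Cramér inequality for Hermite polynomials. Introducing the abbreviation $y \coloneqq \sqrt{2}\alpha\beta x$ and substituting $\phi_n^\alpha$, I would write
\[
\abs[0]{\phi_n^\alpha(x)} = \sqrt{\beta}\,\frac{\abs[0]{\hermite_n(y)}}{\sqrt{n!}}\,\neper^{-\delta^2 x^2}.
\]
Dividing through by the target factor $\sqrt{\beta}\,\neper^{\alpha^2 x^2/2}$, the statement becomes equivalent to the $n$-uniform estimate $\abs[0]{\hermite_n(y)}\,(n!)^{-1/2} \leq K\,\neper^{(\alpha^2/2 + \delta^2)x^2}$.

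The first substantive step is a one-line exponent computation that makes the $\alpha$- and $\ell$-dependence disappear. Since $y^2 = 2\alpha^2\beta^2 x^2$ and $\delta^2 = \tfrac{\alpha^2}{2}(\beta^2-1)$ by~\eqref{eq:constants}, the right-hand exponent collapses:
\[
\Big(\frac{\alpha^2}{2} + \delta^2\Big) x^2 = \frac{\alpha^2 \beta^2}{2}\,x^2 = \frac{y^2}{4}.
\]
Thus the entire claim is equivalent to the single universal inequality $\abs[0]{\hermite_n(y)} \leq K\sqrt{n!}\,\neper^{y^2/4}$, which must hold for every $y \in \R$ and every $n \geq 0$ with no growth restriction, precisely because Cramér's bound is genuinely uniform in $n$. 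It is also reassuring that the exponent matches exactly: the growth rate $\neper^{\alpha^2 x^2/2}$ and the prefactor $\sqrt{\beta}$ in the statement are sharp, with only the universal constant $K$ to spare.

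The final step is to invoke Cramér's inequality. I would quote it in the physicists' normalization, $\abs[0]{H_n^{\mathrm{phys}}(u)} \leq K\,2^{n/2}\sqrt{n!}\,\neper^{u^2/2}$ with $K = 1.0864\ldots$, and translate to the probabilists' polynomials via the identity $\hermite_n(y) = 2^{-n/2} H_n^{\mathrm{phys}}(y/\sqrt{2})$. Setting $u = y/\sqrt{2}$ gives $u^2/2 = y^2/4$, the two factors $2^{\pm n/2}$ cancel, and one arrives at $\abs[0]{\hermite_n(y)} \leq K\sqrt{n!}\,\neper^{y^2/4}$ with the very same constant $K \leq 1.087$. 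The only delicate point in the whole argument—and the one I would be most careful about—is the bookkeeping between the two Hermite normalizations, so that the powers of $2$ and the Gaussian exponent in Cramér's bound line up correctly and the numerical constant is transferred without change; everything else is the elementary exponent identity above.
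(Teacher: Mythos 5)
Your proof is correct and follows essentially the same route as the paper: the Erd\'elyi bound $\hermite_n(x)^2/n! \leq K^2 \neper^{x^2/2}$ that the paper cites is exactly Cram\'er's inequality in the probabilists' normalization, and your exponent identity $(\alpha^2/2+\delta^2)x^2 = y^2/4$ is the same algebra (via $2\delta^2 = \alpha^2\beta^2-\alpha^2$) that the paper uses to collapse $\alpha^2\beta^2 - 2\delta^2$ to $\alpha^2$. The only cosmetic difference is that you derive the probabilists' form of Cram\'er's bound from the physicists' one, whereas the paper quotes it directly.
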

\begin{proof} For each $n \geq 0$, the Hermite polynomials obey the bound
\begin{equation}\label{eq:HermitePolyBound}
\frac{1}{n!} \, H_n(x)^2 \leq K^2 \neper^{x^2/2}
\end{equation}
for a constant $K \leq 1.087$~\citep[p.\ 208]{Erdelyi1953}. See~\citep{BonanClark1990} for other such bounds\footnote{In particular, the factor $n^{-1/6}$ could be added on the right-hand side. This would make little difference in convergence analysis of \Cref{thm:convergence}.}. Thus
\begin{equation*}
\phi_n^\alpha(x)^2 = \frac{\beta}{n!} \neper^{-2\delta^2 x^2} \hermite_n \big( \sqrt{2}\alpha\beta x \big)^2 \leq K^2 \beta \exp\big( (\alpha^2\beta^2 - 2\delta^2) x^2 \big) = K^2 \beta \neper^{\alpha^2 x^2}.
\end{equation*} \qed
\end{proof}

\begin{lemma}\label{lemma:ConvConstant} Let $\alpha = 1/\sqrt{2}$. Then
\begin{equation*}
\sqrt{ \frac{\epsilon^2}{1/2 + \delta^2 + \epsilon^2} } \neper^{\rho/(2\beta^2)} \in (0, 1)
\end{equation*}
for every $\ell > 0$ if and only if $\rho \leq 2$.
\end{lemma}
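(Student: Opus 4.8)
The plan is to reduce the whole statement, after setting $\alpha = 1/\sqrt{2}$, to a clean inequality in the single variable $t \coloneqq \beta^2$ and then to identify an explicit infimum. First I would substitute $\alpha^2 = 1/2$ into \Cref{eq:constants} to get $\beta^2 = \sqrt{1+8\epsilon^2}$ and $\delta^2 = \tfrac14(\beta^2-1)$. Writing $t = \beta^2$, so that $\epsilon^2 = (t^2-1)/8$, the denominator collapses to a perfect square,
\begin{equation*}
\tfrac12 + \delta^2 + \epsilon^2 = \tfrac14 + \tfrac{t}{4} + \tfrac{t^2-1}{8} = \frac{(t+1)^2}{8},
\end{equation*}
while the numerator is $\epsilon^2 = (t-1)(t+1)/8$. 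Hence the radical simplifies to
\begin{equation*}
\sqrt{\frac{\epsilon^2}{1/2+\delta^2+\epsilon^2}} = \sqrt{\frac{t-1}{t+1}},
\end{equation*}
and the quantity in the lemma equals $g(t) \coloneqq \sqrt{(t-1)/(t+1)}\,\neper^{\rho/(2t)}$. As $\ell$ ranges over $(0,\infty)$ the parameter $t = \beta^2 = \sqrt{1+8\epsilon^2}$ ranges over all of $(1,\infty)$, so the universally quantified statement ``$g \in (0,1)$ for every $\ell>0$'' becomes ``$g(t) \in (0,1)$ for every $t>1$''.

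Since $t>1$ makes both factors strictly positive, the lower bound $g(t)>0$ is automatic, and only the upper bound is at issue. Squaring and taking logarithms, $g(t)<1$ is equivalent to $\rho/t < \ln\frac{t+1}{t-1}$, i.e.\ to $\rho < h(t)$, where
\begin{equation*}
h(t) \coloneqq t\,\ln\frac{t+1}{t-1}.
\end{equation*}
The crux is therefore to show that $h(t)>2$ for all $t>1$ and that $\inf_{t>1} h(t)=2$. I would do this by inserting $x=1/t\in(0,1)$ into the standard expansion $\ln\frac{1+x}{1-x}=2\sum_{k\geq0}x^{2k+1}/(2k+1)$, which gives the manifestly positive, termwise-decreasing series
\begin{equation*}
h(t) = 2\sum_{k=0}^{\infty}\frac{1}{(2k+1)\,t^{2k}} = 2\Big(1+\frac{1}{3t^2}+\frac{1}{5t^4}+\cdots\Big).
\end{equation*}
Every term beyond the leading $2$ is positive, so $h(t)>2$ on $(1,\infty)$; and since each higher term is strictly decreasing in $t$ and vanishes as $t\to\infty$, the function $h$ decreases monotonically to the limit $2$, which is thus the infimum and is never attained.

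The conclusion then follows by a short case split. If $\rho\leq2$, then $\rho\leq2<h(t)$ for every $t>1$, so $g(t)<1$ throughout; combined with $g(t)>0$ this gives $g\in(0,1)$ for all $\ell$. Conversely, if $\rho>2$, then because $h(t)\downarrow2$ there is a (large) $t>1$ with $h(t)<\rho$, whence $g(t)>1$ for the corresponding $\ell$, and membership in $(0,1)$ fails. I expect the only real work to be the middle paragraph, namely verifying that the infimum of $h$ is exactly $2$ and is a strict, unattained limit at $t\to\infty$; the algebraic collapse in the first paragraph is routine, but it is precisely what makes $t=\beta^2$ the natural variable and reduces the equivalence to a one-line monotonicity argument.
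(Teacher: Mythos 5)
Your proof is correct, but it takes a genuinely different route from the paper's. The paper works directly with $\gamma(\epsilon^2) = \frac{\epsilon^2}{1/2+\delta^2+\epsilon^2}\,\neper^{\rho/\beta^2}$ as a function of $\epsilon^2$, states its derivative in closed form, and argues by monotonicity: for $\rho \leq 2$ the derivative is positive, so $\gamma$ increases from $\gamma(0)=0$ toward the limit $1$; for $\rho>2$ there is a unique critical point $\epsilon_0^2 = 1/(4(\rho-2))$ at which $\gamma(\epsilon_0^2)>1$. Your substitution $t=\beta^2$ instead collapses the ratio to $(t-1)/(t+1)$ and reduces the lemma to the statement $\inf_{t>1}\, t\ln\frac{t+1}{t-1} = 2$, which you obtain from the power series of $\ln\frac{1+x}{1-x}$ with $x = 1/t$ --- no differentiation at all, and the threshold value $2$ becomes transparent as the leading coefficient of that series. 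What the paper's route buys is the exact location of the failure for $\rho>2$ (a specific $\epsilon_0$, hence a specific length-scale); what yours buys is checkability and an explanation of sharpness: the paper's derivative formula is asserted without derivation and is tedious to verify, whereas every step of yours is elementary, and your argument shows directly that for $\rho > 2$ the bound fails for all sufficiently small $\ell$ (large $t$). The only loose point is the final limit claim: passing $t\to\infty$ inside the infinite sum $2\sum_{k\geq 0}\frac{1}{(2k+1)t^{2k}}$ deserves a one-line justification, e.g.\ bounding the tail for $t>1$ by the geometric series $\sum_{k\geq 1} t^{-2k} = 1/(t^2-1)$, after which $h(t)\to 2$ monotonically and your case split goes through exactly as written.
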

\begin{proof}
The function 
\begin{equation*}
\gamma(\epsilon^2) \coloneqq \frac{\epsilon^2}{1/2 + \delta^2 + \epsilon^2} \neper^{\rho/\beta^2}
\end{equation*}
satisfies $\gamma(0) = 0$ and $\gamma(\epsilon^2) \to 1$ as $\epsilon^2 \to \infty$. The derivative 
\begin{equation*}
\frac{\dif \gamma(\epsilon^2)}{\dif \epsilon^2} = \frac{4\neper^{\rho/\beta^2} (1 + 4(2-\rho)\epsilon^2) }{(4\epsilon^2 + \beta^2 +1)\beta^3}
\end{equation*}
is positive when $\rho \leq 2$. For $\rho > 2$, the derivative has a single root at $\epsilon_0^2 = 1/(4(\rho-2))$ so that $\gamma(\epsilon_0^2) > 1$. That is, $\gamma(\epsilon^2) \in (0,1)$, and consequently $\gamma(\epsilon^2)^{1/2} \in (0,1)$, if and only if $\rho \leq 2$. \qed
\end{proof}

\begin{theorem}\label{thm:convergence} Let $\alpha = 1/\sqrt{2}$. Suppose that the nodes $x_1,\ldots,x_N$ and weights $w_1,\ldots,w_N$ of an $N$-point quadrature rule $Q_N$ satisfy
\begin{enumerate}
\item $\sum_{n=1}^N \abs[0]{w_{n}} \leq W_N$ for some $W_N \geq 0$;
\item $Q_N(\phi_n^\alpha) = \mu(\phi_n^\alpha)$ for each $n = 0,\ldots,M_N-1$ for some $M_N \geq 1$;
\item $\sup_{1\leq n \leq N}\abs[0]{x_{n}} \leq 2 \sqrt{M_N} / \beta$.
\end{enumerate}
Then there exist constants $C_1,C_2 > 0$, independent of $N$ and $Q_N$, and $0 < \eta < 1$ such that
\begin{equation*}
e(Q_N) \leq (1 + C_1 W_N) C_2 \eta^{M_N}.
\end{equation*}
Explicit forms of these constants appear in \Cref{eq:ConvConstants}.
\end{theorem}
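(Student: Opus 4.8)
The plan is to reduce the worst-case error to a spectral series and then bound its tail. Using RKHS fact (ii) together with the $\rkhs$-orthonormality of $\{\sqrt{\lambda_n^\alpha}\,\phi_n^\alpha\}_{n\ge0}$, I would expand every $f$ with $\norm[0]{f}_\rkhs\le1$ as $f=\sum_{n=0}^\infty a_n\sqrt{\lambda_n^\alpha}\,\phi_n^\alpha$ with $\sum_n a_n^2\le1$, so that the error functional becomes $\mu(f)-Q_N(f)=\sum_n a_n\sqrt{\lambda_n^\alpha}\,[\mu(\phi_n^\alpha)-Q_N(\phi_n^\alpha)]$. Applying Cauchy--Schwarz and taking the supremum over $f$ then gives the standard spectral identity
\begin{equation*}
e(Q_N)^2=\sum_{n=0}^\infty \lambda_n^\alpha\,\big[\mu(\phi_n^\alpha)-Q_N(\phi_n^\alpha)\big]^2.
\end{equation*}
By the exactness hypothesis (condition~2), the first $M_N$ summands vanish, so only indices $n\ge M_N$ survive.

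Next I would bound each surviving factor $\abs[0]{\mu(\phi_n^\alpha)-Q_N(\phi_n^\alpha)}$ uniformly in $n$. \Cref{lemma:eigFuncBound} gives $\abs[0]{\phi_n^\alpha(x)}\le K\sqrt{\beta}\,\neper^{\alpha^2x^2/2}$ for every $n$. Integrating this bound against $\mu$ (a one-line Gaussian moment computation, using $\alpha=1/\sqrt2$) controls $\abs[0]{\mu(\phi_n^\alpha)}$ by the constant $\sqrt2\,K\sqrt{\beta}$, while at the nodes condition~3 yields $\alpha^2x_n^2/2\le M_N/\beta^2$, whence
\begin{equation*}
\abs[0]{Q_N(\phi_n^\alpha)}\le\sum_{j=1}^N\abs[0]{w_j}\,\abs[0]{\phi_n^\alpha(x_j)}\le W_N K\sqrt{\beta}\,\neper^{M_N/\beta^2}
\end{equation*}
by condition~1. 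Combining, $\abs[0]{\mu(\phi_n^\alpha)-Q_N(\phi_n^\alpha)}\le K\sqrt{\beta}\,\big(\sqrt2+W_N\neper^{M_N/\beta^2}\big)$ independently of $n$.

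It then remains to sum the eigenvalue tail. Writing $\lambda_n^\alpha=c\,r^n$ with $c=\sqrt{\alpha^2/(\alpha^2+\delta^2+\epsilon^2)}$ and $r=\epsilon^2/(\alpha^2+\delta^2+\epsilon^2)<1$, the geometric series gives $\sum_{n\ge M_N}\lambda_n^\alpha=c\,r^{M_N}/(1-r)$, and substituting the two bounds into the spectral identity produces
\begin{equation*}
e(Q_N)\le K\sqrt{\beta}\sqrt{\frac{c}{1-r}}\,\big(\sqrt2+W_N\neper^{M_N/\beta^2}\big)\,r^{M_N/2}.
\end{equation*}
I would split the last product as $\sqrt2\,(\sqrt r)^{M_N}+W_N(\sqrt r\,\neper^{1/\beta^2})^{M_N}$ and invoke \Cref{lemma:ConvConstant} with $\rho=2$, which guarantees $\eta\coloneqq\sqrt r\,\neper^{1/\beta^2}\in(0,1)$ for every $\ell>0$. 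Since $\sqrt r\le\eta$, both terms are bounded by $\eta^{M_N}$, giving $e(Q_N)\le K\sqrt{\beta}\sqrt{c/(1-r)}\,(\sqrt2+W_N)\eta^{M_N}$; this is the asserted form with $C_1=1/\sqrt2$ and $C_2=\sqrt2\,K\sqrt{\beta}\sqrt{c/(1-r)}$.

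The hard part is precisely this final reconciliation. Because the eigenfunctions grow like $\neper^{\alpha^2x^2/2}$, spreading the nodes out inflates $\abs[0]{Q_N(\phi_n^\alpha)}$ exponentially in $M_N$, and a priori this growth could overwhelm the eigenvalue decay $r^{M_N/2}$. The node bound $\sup_n\abs[0]{x_n}\le2\sqrt{M_N}/\beta$ in condition~3 is calibrated so that the growth factor becomes exactly $\neper^{M_N/\beta^2}$, pairing with $(\sqrt r)^{M_N}$ to form $\eta^{M_N}$, and \Cref{lemma:ConvConstant} shows $\rho=2$ is the threshold exponent for which $\eta$ still stays below one. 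Everything else—the moment integral for $\mu(\phi_n^\alpha)$ and summing the geometric series—is routine, so I expect the only delicate point to be tracking this growth-versus-decay balance and verifying the interchange of summation with evaluation and integration.
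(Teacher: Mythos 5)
Your proposal is correct and follows essentially the same route as the paper's proof: expand in the Mercer eigenbasis, use exactness to kill the first $M_N$ terms, bound $\abs[0]{\mu(\phi_n^\alpha)-Q_N(\phi_n^\alpha)}$ uniformly via \Cref{lemma:eigFuncBound} together with conditions 1 and 3, sum the geometric eigenvalue tail, and invoke \Cref{lemma:ConvConstant} with $\rho=2$ to obtain $\eta=\sqrt{\lambda}\,\neper^{1/\beta^2}<1$. The only (immaterial) differences are that you apply Cauchy--Schwarz globally to get an $\ell^2$/Parseval-type tail bound where the paper applies it term-wise to get the $\ell^1$-type bound $e(Q_N)\leq\sum_{n\geq M_N}\lambda_n^{1/2}\abs[0]{\mu(\phi_n^\alpha)-Q_N(\phi_n^\alpha)}$, and that you bound $\abs[0]{\mu(\phi_n^\alpha)}$ by $\sqrt{2}K\sqrt{\beta}$ by integrating the pointwise eigenfunction bound where the paper uses the sharper $\mu(\phi_n^\alpha)\leq\mu((\phi_n^\alpha)^2)^{1/2}=1$ from $L^2(\mu)$-orthonormality; both yield the same rate $\eta^{M_N}$ with slightly different constants.
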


\begin{proof}
For notational convenience, denote
\begin{equation*}
\lambda_n^\alpha = \lambda_n = \sqrt{\frac{1/2}{1/2 + \delta^2 + \epsilon^2}} \bigg( \frac{\epsilon^2}{1/2 + \delta^2 + \epsilon^2} \bigg)^{n} = \tau \lambda^n
\end{equation*}
and $\phi_n = \phi_n^\alpha$. Because every $f \in \rkhs$ admits the expansion $f = \sum_{n=0}^\infty \lambda_n \inprod{f}{\phi_n}_\rkhs \phi_n$ and $Q_N(\phi_n) = \mu(\phi_n)$ for $n < M_N$, it follows from the Cauchy--Schwarz inequality and $\norm[0]{\phi_n}_\rkhs = 1/\sqrt{\lambda_n}$ that
\begin{equation}\label{eq:truncation-bound}
\begin{split}
\abs[0]{\mu(f) - Q_N(f)} &= \abs[3]{\sum_{n=M_N}^\infty \lambda_n \inprod{f}{\phi_n}_\rkhs \, [ \mu(\phi_n) - Q_N(\phi_n) ]} \\
&\leq \norm[0]{f}_\rkhs \sum_{n=M_N}^\infty \lambda_n^{1/2} \abs[0]{ \mu(\phi_n) - Q_N(\phi_n) }.
\end{split}
\end{equation}
From \Cref{lemma:eigFuncBound} we have $\abs[0]{\phi_n(x)} \leq K \sqrt{\beta} \neper^{x^2/4}$ for a constant $K \leq 1.087$. Consequently, the assumption $\sup_{1\leq m \leq N} \abs[0]{x_m} \leq 2 \sqrt{M_N}/\beta$ yields
\begin{equation*}
\sup_{1\leq m \leq N} \, \sup_{n \geq 0} \, \abs[0]{\phi_n(x_m)} \leq K \sqrt{\beta} \neper^{M_N/\beta^2}.
\end{equation*}
Combining this with Hölder's inequality and $L^2(\mu)$-orthonormality of $\phi_n$, that imply $\mu(\phi_n) \leq \mu(\phi_n^2)^{1/2} = 1$, we obtain the bound
\begin{equation}\label{eq:PhiError}
\abs[0]{ \mu(\phi_n) - Q_N(\phi_n) } \leq 1 + \sum_{m=1}^N \abs[0]{w_m} \abs[0]{\phi_n(x_m)} \leq 1 + K \sqrt{\beta} W_N \neper^{M_N/\beta^2}.
\end{equation}
Inserting this into \Cref{eq:truncation-bound} produces
\begin{equation}\label{eq:ConvConstants}
\begin{split}
\abs[0]{\mu(f) - Q_N(f)} &\leq \norm[0]{f}_\rkhs \big( 1 + W_N K \sqrt{\beta} \neper^{M_N/\beta^2} \big) \sum_{n=M_N}^\infty \lambda_n^{1/2} \\
&= \norm[0]{f}_\rkhs \big( 1 + K \sqrt{\beta} W_N \neper^{M_N/\beta^2} \big) \sqrt{\tau} \sum_{n=M_N}^\infty \lambda^{n/2} \\
&= \norm[0]{f}_\rkhs \big( 1 + K \sqrt{\beta} W_N \neper^{M_N/\beta^2} \big) \frac{\sqrt{\tau}}{1-\sqrt{\lambda}} \lambda^{M_N/2} \\ 
&\leq \norm[0]{f}_\rkhs \big( 1 + K \sqrt{\beta} W_N \big) \frac{\sqrt{\tau}}{1-\sqrt{\lambda}} \big( \sqrt{\lambda} \neper^{1/\beta^2} \big)^{M_N}.
\end{split}
\end{equation}
Noticing that $\sqrt{\lambda} \neper^{1/\beta^2} < 1$ by \Cref{lemma:ConvConstant} concludes the proof. \qed
\end{proof}

\begin{remark} From \Cref{lemma:ConvConstant} we observe that the proof does not yield $\eta < 1$ (for every~$\ell$) if the assumption $\sup_{1\leq n \leq N}\abs[0]{x_{n}} \leq 2 \sqrt{M_N} / \beta$ on placement of the nodes is relaxed by replacing the constant $2$ on the right-hand side with $C > 2$.
\end{remark}

Consider now the $N$-point approximate Gaussian kernel quadrature rule \sloppy{${\widetilde{Q}_{k,N} = \sum_{n=1}^N \widetilde{w}_{k,n} f(\xkgh_n)}$} whose nodes and weights are defined in \Cref{thm:main} and set $\alpha = 1/\sqrt{2}$. The nodes $x_n^\textsm{GH}$ of the $N$-point Gauss--Hermite rule admit the bound~\citep{AreaDimitrovGodoyRonveaux2004}
\begin{equation*}
\sup_{1 \leq n \leq N} \abs[0]{x_n^\textsm{GH}} \leq 2\sqrt{N-1}
\end{equation*}
for every $N \geq 1$. That is,
\begin{equation*}
\xkgh_n = \frac{1}{\beta} x_n^\textsm{GH} \leq \frac{2\sqrt{N}}{\beta}.
\end{equation*}
Since the rule $\widetilde{Q}_{k,N}$ is exact for the first $N$ eigenfunctions, $M_N = N$. Hence the assumption on placement of the nodes in \Cref{thm:convergence} holds. As our numerical experiments indicate that the weights $\widetilde{w}_{k,n}$ are positive and $\sum_{n=1}^N \abs[0]{\widetilde{w}_{k,n}} \to 1$ as $N \to \infty$, it seems that the exponential convergence rate of \Cref{thm:convergence} is valid for $\widetilde{Q}_{k,N}$ (as well as for the corresponding kernel quadrature rule $Q_{k,N}$) with $M_N = N$. Naturally, this result is valid whenever the growth of the absolute weight sum is, for example, polynomial in~$N$.

\begin{theorem}\label{thm:ConvergenceSpecific} Let $\alpha = 1/\sqrt{2}$ and suppose that $\sup_{N \geq 1} \sum_{n=1}^N \abs[0]{\widetilde{w}_{k,n}} < \infty$. Then the quadrature rules $\widetilde{Q}_{k,N}(f) = \sum_{n=1}^N \widetilde{w}_{k,n} f(\xkgh_n)$ and $Q_{k,N}(f) = \sum_{n=1}^N w_{k,n} f(\xkgh_n)$ satisfy
\begin{equation*}
e(Q_{k,N}) \leq e(\widetilde{Q}_{k,N}) = \bigO(\eta^N)
\end{equation*}
for $0 < \eta < 1$.
\end{theorem}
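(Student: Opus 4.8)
The plan is to recognise that \Cref{thm:ConvergenceSpecific} is essentially a direct application of the generic \Cref{thm:convergence} to the particular rule $\widetilde{Q}_{k,N}$, combined with the optimality of the exact kernel quadrature weights. I would split the argument into two independent parts: first the inequality $e(Q_{k,N}) \leq e(\widetilde{Q}_{k,N})$, and then the exponential decay $e(\widetilde{Q}_{k,N}) = \bigO(\eta^N)$.

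For the inequality I observe that both rules share the \emph{same} nodes $\xkgh_n$; only the weight vectors differ, $w_k$ for $Q_{k,N}$ versus $\widetilde{w}_k$ for $\widetilde{Q}_{k,N}$. As recalled at the beginning of \Cref{sec:convergence}, for fixed nodes the kernel quadrature weights $w_k$ are the unique minimisers of the worst-case error over all weight vectors in $\R^N$. Since $\widetilde{w}_k$ is merely one admissible weight vector at those same nodes, the worst-case error of $Q_{k,N}$ cannot exceed that of $\widetilde{Q}_{k,N}$, giving the claimed inequality immediately.

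For the exponential decay I would verify the three hypotheses of \Cref{thm:convergence} with $M_N = N$. First, the standing assumption $\sup_{N \geq 1} \sum_{n=1}^N \abs[0]{\widetilde{w}_{k,n}} < \infty$ lets me set $W \coloneqq \sup_{N \geq 1} \sum_{n=1}^N \abs[0]{\widetilde{w}_{k,n}}$, a bound on the absolute weight sum that is uniform in $N$, so the first hypothesis holds with $W_N = W$. Second, by \Cref{thm:main} the weights $\widetilde{w}_{k,n}$ are defined precisely by the exactness conditions $\widetilde{Q}_k(\phi_n^\alpha) = \mu(\phi_n^\alpha)$ for $n = 0,\ldots,N-1$ (using $\mu_\alpha = \mu$ since $\alpha = 1/\sqrt{2}$), so the second hypothesis holds with $M_N = N$. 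Third, the Gauss--Hermite node bound $\sup_n \abs[0]{x_n^\textsm{GH}} \leq 2\sqrt{N-1} \leq 2\sqrt{N}$ combined with the scaling $\xkgh_n = x_n^\textsm{GH}/\beta$ (valid for $\alpha = 1/\sqrt{2}$) yields $\sup_n \abs[0]{\xkgh_n} \leq 2\sqrt{N}/\beta = 2\sqrt{M_N}/\beta$, so the third hypothesis holds. Invoking \Cref{thm:convergence} then gives $e(\widetilde{Q}_{k,N}) \leq (1 + C_1 W) C_2 \eta^N$ for some $0 < \eta < 1$, and since $(1 + C_1 W) C_2$ is a constant independent of $N$ this is $\bigO(\eta^N)$.

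The bulk of the difficulty has already been discharged inside \Cref{thm:convergence}, so this step is really an assembly argument; the only point genuinely requiring care is that the boundedness of the absolute weight sum must be promoted to a truly $N$-uniform $W_N$, so that the prefactor $(1 + C_1 W_N) C_2$ in the bound does not itself grow with $N$ and spoil the clean $\bigO(\eta^N)$ rate. This is exactly what the finiteness of the supremum in the hypothesis delivers; had I only polynomial growth of $W_N$ available instead, I would absorb that polynomial factor into a marginally larger $\eta < 1$, which is why the rate is robust under the weaker polynomial-growth scenario mentioned in the discussion preceding the theorem.
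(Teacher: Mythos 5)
Your proposal is correct and follows essentially the same route as the paper: the paper likewise establishes \Cref{thm:ConvergenceSpecific} by verifying the three hypotheses of \Cref{thm:convergence} with $M_N = N$ (exactness for the first $N$ eigenfunctions from \Cref{thm:main}, the Gauss--Hermite root bound $\abs[0]{x_n^\textsm{GH}} \leq 2\sqrt{N-1}$ giving $\abs[0]{\xkgh_n} \leq 2\sqrt{N}/\beta$, and the uniform weight-sum bound from the hypothesis) and then using the worst-case-error optimality of the kernel quadrature weights at fixed nodes to conclude $e(Q_{k,N}) \leq e(\widetilde{Q}_{k,N})$. Your closing remark about absorbing polynomial growth of the weight sums into a slightly larger $\eta$ also matches the paper's observation preceding the theorem.
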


Another interesting case are the \emph{generalised Gaussian quadrature rules}\footnote{Note that the cited results are for kernels and functions on compact intervals. However, generalisations for the whole real line are possible~\citep[Chapter VI]{KarlinStudden1966}.} for the eigenfunctions. As the eigenfunctions constitute a complete Chebyshev system~\mbox{\citep{Kellog1918,Pinkus1996}}, there exists a quadrature rule $Q^*_N$ with positive weights $w_1^*,\ldots,w_N^*$ such that $Q_N^*(\phi_n) = \mu(\phi_n)$ for every $n=0,\ldots,2N-1$~\citep{Barrow1978}. Appropriate control of the nodes of these quadrature rules would establish an exponential convergence result with the ``double rate'' $M_N = 2N$.

\section{Tensor product rules}\label{sec:tensor}

Let $Q_1,\ldots,Q_d$ be quadrature rules on $\R$ with nodes $X_i = \{ x_{i,1},\ldots,x_{i,N_i} \}$ and weights $w_{1}^i,\ldots,w_{N_i}^i$ for each $i=1,\ldots,d$. The \emph{tensor product rule} on the Cartesian grid \sloppy{${X \coloneqq X_1 \times \cdots \times X_d \subset \R^d}$} is the cubature rule
\begin{equation}\label{eq:tensorRule}
Q^d(f) \coloneqq (Q_1 \otimes \cdots \otimes Q_d)(f) = \sum_{\mathcal{I} \leq \mathcal{N}} w_\mathcal{I} f(x_\mathcal{I}),
\end{equation}
where $\mathcal{I} \in \N^d$ is a multi-index, $\mathcal{N} \coloneqq (N_1,\ldots,N_d) \in \N^d$, and the nodes and weights are
\begin{equation*}
x_\mathcal{I} \coloneqq (x_{1,\mathcal{I}(1)}, \ldots x_{d,\mathcal{I}(d)}) \in X \hspace{0.5cm} \text{and} \hspace{0.5cm} w_{\mathcal{I}} \coloneqq \prod_{i=1}^d w_{\mathcal{I}(i)}^i.
\end{equation*}
We equip $\R^d$ with the $d$-variate standard Gaussian measure
\begin{equation}\label{eq:measureMultiD}
\dif \mu^d(x) \coloneqq (2\pi)^{-d/2} \neper^{-\norm[0]{x}^2/2} \dif x = \prod_{i=1}^d \dif \mu(x_i).
\end{equation}
The following proposition is a special case of a standard result on exactness of tensor product rules~\citep[Section 2.4]{Oettershagen2017}.

\begin{proposition}\label{prop:tensor} Consider the tensor product rule~\eqref{eq:tensorRule} and suppose that, for each $i=1,\ldots,d$, $Q_i(\phi^i_n) = \mu(\phi^i_n)$ for some functions $\phi^i_1,\ldots,\phi_{N_i}^i \colon \R \to \R$. Then
\begin{equation*}
Q^d(f) = \mu^d(f) \hspace{0.5cm} \text{for every} \hspace{0.5cm} f \in \lspan \Set[\big]{ \textstyle \prod_{i=1}^d \phi_{\mathcal{I}(i)}^i }{ \mathcal{I} \leq \mathcal{N} }.
\end{equation*}
\end{proposition}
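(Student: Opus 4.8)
The plan is to exploit bilinearity together with the product structure of both the rule $Q^d$ and the measure $\mu^d$, reducing the whole claim to the univariate exactness hypothesis. Since $Q^d$ and $\mu^d$ are both linear functionals, it suffices to verify $Q^d(f) = \mu^d(f)$ for a single spanning function
\begin{equation*}
f = \prod_{i=1}^d \phi_{\mathcal{I}(i)}^i,
\end{equation*}
where $\mathcal{I} \leq \mathcal{N}$ is a fixed multi-index and the $i$-th factor is read as a function depending only on the $i$-th coordinate. Both sides will then factor coordinatewise, and the factors will match one by one by assumption.

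For the integral, I would use that $\mu^d = \prod_{i=1}^d \mu$ by \Cref{eq:measureMultiD} and that $f$ is a product of functions of separate variables; Fubini's theorem (whose hypotheses are guaranteed by the implicit $\mu$-integrability of each $\phi_{\mathcal{I}(i)}^i$, needed for $\mu(\phi_{\mathcal{I}(i)}^i)$ to be defined) then gives
\begin{equation*}
\mu^d(f) = \int_{\R^d} \prod_{i=1}^d \phi_{\mathcal{I}(i)}^i(x_i) \dif \mu^d(x) = \prod_{i=1}^d \mu(\phi_{\mathcal{I}(i)}^i).
\end{equation*}
For the quadrature side I would expand the definition of $Q^d$ with summation index $\mathcal{J} \leq \mathcal{N}$, insert $w_{\mathcal{J}} = \prod_{i=1}^d w_{\mathcal{J}(i)}^i$ and $f(x_\mathcal{J}) = \prod_{i=1}^d \phi_{\mathcal{I}(i)}^i(x_{i,\mathcal{J}(i)})$, so that each summand is the product over $i$ of the factor $w_{\mathcal{J}(i)}^i \phi_{\mathcal{I}(i)}^i(x_{i,\mathcal{J}(i)})$. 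The generalized distributive law then collapses the multi-index sum into a product of univariate sums,
\begin{equation*}
Q^d(f) = \sum_{\mathcal{J} \leq \mathcal{N}} \prod_{i=1}^d w_{\mathcal{J}(i)}^i \phi_{\mathcal{I}(i)}^i(x_{i,\mathcal{J}(i)}) = \prod_{i=1}^d \sum_{j=1}^{N_i} w_j^i \phi_{\mathcal{I}(i)}^i(x_{i,j}) = \prod_{i=1}^d Q_i(\phi_{\mathcal{I}(i)}^i).
\end{equation*}
Finally, since $\mathcal{I}(i) \leq N_i$ for every $i$, the hypothesis $Q_i(\phi_n^i) = \mu(\phi_n^i)$ applies to each factor, whence $\prod_{i=1}^d Q_i(\phi_{\mathcal{I}(i)}^i) = \prod_{i=1}^d \mu(\phi_{\mathcal{I}(i)}^i) = \mu^d(f)$, closing the argument.

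There is no genuine obstacle here, as this is a standard tensorization result; the only point requiring care is the factorization of the sum over $\mathcal{J} \leq \mathcal{N}$ into a product of single-index sums, which is purely the distributive law and can be made fully rigorous by induction on $d$ if desired. I would also keep in mind, when this proposition is applied in \Cref{sec:tensor}, that the univariate rules there are exact for the Mercer eigenfunctions $\phi_0^\alpha,\ldots,\phi_{N_i-1}^\alpha$ indexed from zero, so the role of $\phi_1^i,\ldots,\phi_{N_i}^i$ above is played by this shifted family; this is a harmless relabeling that does not affect the computation.
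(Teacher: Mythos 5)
Your proof is correct. Note that the paper itself offers no proof of \Cref{prop:tensor}: it is stated as a special case of a standard result on exactness of tensor product rules and dispatched with a citation to Oettershagen's thesis, so there is no internal argument to compare yours against. What you give---reduction by linearity to a single spanning product function, factorization of $\mu^d(f)$ into $\prod_{i=1}^d \mu(\phi_{\mathcal{I}(i)}^i)$ via the product structure of the measure and Fubini, and collapse of the multi-index quadrature sum into $\prod_{i=1}^d Q_i(\phi_{\mathcal{I}(i)}^i)$ by the distributive law---is precisely the standard tensorization argument that the citation stands in for, and the two points of care you flag (integrability of each factor so that Fubini applies, and the harmless index shift from $\phi_1^i,\ldots,\phi_{N_i}^i$ to the zero-indexed eigenfunctions in \Cref{sec:tensor}) are handled correctly.
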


When a multivariate kernel is \emph{separable}, this result can be used in constructing kernel cubature rules out of kernel quadrature rules. We consider $d$-dimensional separable Gaussian kernels
\begin{equation}\label{eq:multiDKernel}
k^d(x,y) \coloneqq \exp\bigg( \! - \frac{1}{2} \sum_{i=1}^d \frac{(x_i - y_i)^2}{\ell_i^2} \bigg) = \prod_{i=1}^d \exp\bigg( \! -\frac{(x_i-y_i)^2}{2\ell_i^2} \bigg) \eqqcolon \prod_{i=1}^d k_i(x_i,y_i),
\end{equation}
where $\ell_i$ are dimension-wise length-scales. For each $i=1,\ldots,d$, the kernel quadrature rule $Q_{k,i}$ with nodes $X_i = \{ x_{i,1},\ldots,x_{i,N_i} \} $ and weights $w_{k,1}^i,\ldots,w_{k,N_i}^i$ is, by definition, exact for the $N_i$ kernel translates at the nodes:
\begin{equation*}
Q_{k,i}\big(k(x_{i,n}, \cdot)\big) = \mu\big(k(x_{i,n}, \cdot)\big)
\end{equation*}
for each $n=1,\ldots,N_i$. \Cref{prop:tensor} implies that the $d$-dimensional kernel cubature rule $Q_k^d$ at the nodes $X = X_1 \times \cdots \times X_d$ is a tensor product of the univariate rules:
\begin{equation}\label{eq:tensorKQ}
Q_k^d(f) = (Q_{k,1} \otimes \cdots \otimes Q_{k,d})(f) \eqqcolon \sum_{\mathcal{I} \leq \mathcal{N}} w_{k,\mathcal{I}} f(x_\mathcal{I}),
\end{equation}
with the weights being products of univariate Gaussian kernel quadrature weights, \sloppy{${w_{k,\mathcal{I}} = \prod_{i=1}^d w_{k,\mathcal{I}(i)}}$}.
This is the case because each kernel translate $k^d(x,\cdot)$, $x \in X$, can be written as
\begin{equation*}
k^d(x, \cdot) = \prod_{i=1}^d k_i(x_i, \cdot)
\end{equation*}
by separability of $k^d$.

We can extend \Cref{thm:main} to higher dimensions if the node set is a Cartesian product of a number of scaled Gauss--Hermite node sets. For this purpose, for each $i=1,\ldots,d$ we use the $L(\mu_{\alpha_i})^2$-orthonormal eigendecomposition of the Gaussian kernel $k_i$. The eigenfunctions, eigenvalues, and other related constants from \Cref{sec:eigendecomposition} for the eigendecomposition of the $i$th kernel are assigned an analogous subscript. Furthermore, use the notation
\begin{equation*}
\lambda_\mathcal{I} \coloneqq \prod_{i=1}^d \lambda_{\mathcal{I}(i)}^{\alpha_i} \hspace{0.5cm} \text{and} \hspace{0.5cm} \phi_\mathcal{I}(x) = \prod_{i=1}^d \phi_{\mathcal{I}(i)}^{\alpha_i}(x_i).
\end{equation*}

\begin{theorem}\label{thm:mainMultiD} For $i=1,\ldots,d$, let $x_{i,1}^\textsm{GH},\ldots,x_{i,N_i}^\textsm{GH}$ and $w_{i,1}^\textsm{GH}, \ldots w_{i,N_i}^\textsm{GH}$ stand for the nodes and weights of the $N_i$-point Gauss--Hermite quadrature rule and define the nodes
\begin{equation}\label{eq:multiDNdoes}
\xkgh_{i,n} \coloneqq \frac{1}{\sqrt{2} \alpha_i \beta_i} x_{i,n}^\textsm{GH}.
\end{equation}
Then the weights of the tensor product quadrature rule
\begin{equation*}
\widetilde{Q}_k^d(f) \coloneqq \sum_{\mathcal{I} \leq \mathcal{N}} \wkgh_{k,\mathcal{I}} f(\xkgh_{\mathcal{I}}),
\end{equation*}
that is defined by the exactness conditions $\widetilde{Q}_k^d(\phi_\mathcal{I}) = \mu^d(\phi_\mathcal{I})$ for every $\mathcal{I} \leq \mathcal{N}$, are \sloppy{${\widetilde{w}_{k,\mathcal{I}} = \prod_{i=1}^d \widetilde{w}_{k,\mathcal{I}(i)}^i}$} for
\begin{equation*}
\widetilde{w}_{k,n}^i = \bigg(\frac{1}{1+2\delta_i^2}\bigg)^{1/2} w_{i,n}^\textsm{GH} \neper^{\delta^2 \xkgh_{i,n}^2} \sum_{m=0}^{\floor{(N-1)/2}} \frac{1}{2^m m! } \bigg( \! \frac{2\alpha_i^2 \beta_i^2}{1+2\delta_i^2} - 1 \bigg)^m \hermite_{2m}(x_{i,n}^\textsm{GH}),
\end{equation*}
where $\alpha$, $\beta$, and $\delta$ are defined in \Cref{eq:constants} and $\hermite_{2m}$ are the probabilists' Hermite polynomials~\eqref{eq:hermite}. 
\end{theorem}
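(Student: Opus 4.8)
The plan is to reduce the $d$-variate statement to the univariate \Cref{thm:main} by invoking the tensor product exactness result \Cref{prop:tensor}. First I would note that, for each $i = 1,\ldots,d$, \Cref{thm:main} applied to the kernel $k_i$ furnishes a univariate rule $\widetilde{Q}_{k,i}$ with nodes $\xkgh_{i,n}$ from \Cref{eq:multiDNdoes} and weights $\widetilde{w}_{k,n}^i$ that is exact for the first $N_i$ eigenfunctions, i.e.\ $\widetilde{Q}_{k,i}(\phi_n^{\alpha_i}) = \mu(\phi_n^{\alpha_i})$ for $n = 0,\ldots,N_i-1$. Because the $d$-variate eigenfunctions factor as $\phi_\mathcal{I}(x) = \prod_{i=1}^d \phi_{\mathcal{I}(i)}^{\alpha_i}(x_i)$ and $\mu^d = \mu \otimes \cdots \otimes \mu$, \Cref{prop:tensor} then directly yields that the tensor product rule $\widetilde{Q}_{k,1} \otimes \cdots \otimes \widetilde{Q}_{k,d}$ satisfies $Q^d(\phi_\mathcal{I}) = \mu^d(\phi_\mathcal{I})$ for every $\mathcal{I} \leq \mathcal{N}$. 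By the definition of a tensor product rule, this rule uses the grid nodes $\xkgh_\mathcal{I}$ and the product weights $\prod_{i=1}^d \widetilde{w}_{k,\mathcal{I}(i)}^i$, with each factor inherited in closed form from \Cref{thm:main}.

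It remains to verify that the stated exactness conditions determine the weights uniquely, so that this product-weight rule is precisely the rule $\widetilde{Q}_k^d$ of the statement. The conditions $Q^d(\phi_\mathcal{J}) = \mu^d(\phi_\mathcal{J})$ for $\mathcal{J} \leq \mathcal{N}$ form a square linear system in the $\prod_{i=1}^d N_i$ unknown weights, whose coefficient matrix has entries $\phi_\mathcal{J}(\xkgh_\mathcal{I})$ and therefore equals, up to transposition, the Kronecker product $\Phi_1 \otimes \cdots \otimes \Phi_d$ of the univariate eigenfunction matrices $\Phi_i$. As observed in \Cref{sec:appweights}, each $\Phi_i$ is invertible because the eigenfunctions form a Chebyshev system; a Kronecker product of invertible matrices is invertible, so the weights are uniquely fixed and must coincide with the product weights constructed above.

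The argument is essentially bookkeeping on top of two results already in hand, so I do not expect a genuine obstacle. The only points requiring care are the counting---confirming that the number of exactness conditions $\prod_{i=1}^d N_i$ matches the number of grid weights---and aligning the multi-index ordering of the system matrix with the Kronecker structure so that the invertibility transfer is clean. If one prefers to bypass uniqueness entirely, an equivalent route is to \emph{take} the product-weight rule as given and simply check via \Cref{prop:tensor} that it meets the stated exactness conditions; either way the claimed formula $\widetilde{w}_{k,\mathcal{I}} = \prod_{i=1}^d \widetilde{w}_{k,\mathcal{I}(i)}^i$ follows.
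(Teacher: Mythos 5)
Your proposal is correct and follows essentially the route the paper intends: the paper states \Cref{thm:mainMultiD} without an explicit proof, leaving it as the immediate combination of \Cref{thm:main} (applied coordinate-wise to each $k_i$) with the tensor product exactness result of \Cref{prop:tensor} and the factorisations $\mu^d = \mu \otimes \cdots \otimes \mu$ and $\phi_\mathcal{I}(x) = \prod_{i=1}^d \phi_{\mathcal{I}(i)}^{\alpha_i}(x_i)$. Your added uniqueness step---invertibility of the Kronecker product $\Phi_1 \otimes \cdots \otimes \Phi_d$, inherited from the Chebyshev-system property of each univariate $\Phi_i$---is a correct and worthwhile piece of bookkeeping that the paper glosses over in asserting the rule is \emph{defined} by the exactness conditions.
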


As in one dimension, the weights $\wkgh_{k,\mathcal{I}}$ are supposed to approximate $w_{k,\mathcal{I}}$. Moreover, convergence rates can be obtained: a tensor product analogues of~\Cref{thm:convergence,thm:ConvergenceSpecific} follow from noting that every function $f \colon \R^d \to \R$ in the RKHS $\rkhs^d$ of $k^d$ admits the multivariate Mercer expansion
\begin{equation*}
f(x) = \sum_{ \mathcal{I} \geq 0 } \lambda_\mathcal{I} \inprod{f}{\phi_\mathcal{I}}_{\rkhs^d} \phi_\mathcal{I}(x).
\end{equation*}
See~\citep{KuoSloanWozniakowski2017} for similar convergence analysis of tensor product Gauss--Hermite rules in~$\rkhs^d$.

\begin{theorem} Let $\alpha_1 = \cdots = \alpha_d = 1/\sqrt{2}$. Suppose that the nodes $x_{i,1},\ldots,x_{i,N_i}$ and weights $w_{1}^i,\ldots,w_{N_i}^i$ of the $N_i$-point quadrature rules $Q_{1,N_1},\ldots,Q_{d,N_d}$ satisfy
\begin{enumerate}
\item $\sup_{1\leq i \leq d} \sum_{n=1}^{N_i} \abs[0]{w_{n}^i} \leq W_\mathcal{N}$ for some $W_\mathcal{N} \geq 1$;
\item $Q_{i,N_i}(\phi_n^\alpha) = \mu(\phi_n^\alpha)$ for each $n = 0,\ldots,M_{N_i}-1$ and $i=1,\ldots,d$ for some $M_{N_i} \geq 1$;
\item $\sup_{1\leq n \leq {N_i}}\abs[0]{x_{i,n}} \leq 2 \sqrt{M_{N_i}} / \beta$ for each $i=1,\ldots,d$.
\end{enumerate}
Define the tensor product rule
\begin{equation*}
Q_\mathcal{N}^d = Q_{1,N_1} \otimes \cdots \otimes Q_{d,N_d}.
\end{equation*}
Then there exist constants $C > 0$, independent of $\mathcal{N}$ and $Q_\mathcal{N}^d$, and $0 < \eta < 1$ such that
\begin{equation*}
e(Q_\mathcal{N}^d) \leq C W_\mathcal{N}^d \eta^{M},
\end{equation*}
where $M = \min(M_{N_1},\ldots,M_{N_d})$. Explicit forms of $C$ and $\eta$ appear in \Cref{eq:ConvConstantsMulti}.
\end{theorem}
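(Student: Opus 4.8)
The plan is to transport the one-dimensional argument of \Cref{thm:convergence} into the tensor product RKHS $\rkhs^d$, using the product Mercer expansion $f = \sum_{\mathcal{I} \geq 0} \lambda_\mathcal{I} \inprod{f}{\phi_\mathcal{I}}_{\rkhs^d} \phi_\mathcal{I}$ recorded above. First I would pin down the multi-indices that $Q_\mathcal{N}^d$ integrates exactly. Because both $Q_\mathcal{N}^d(\phi_\mathcal{I}) = \prod_{i=1}^d Q_{i,N_i}(\phi_{\mathcal{I}(i)}^{\alpha_i})$ and $\mu^d(\phi_\mathcal{I}) = \prod_{i=1}^d \mu(\phi_{\mathcal{I}(i)}^{\alpha_i})$ factorise over coordinates, assumption (2) together with \Cref{prop:tensor} gives $Q_\mathcal{N}^d(\phi_\mathcal{I}) = \mu^d(\phi_\mathcal{I})$ on the box $B \coloneqq \prod_{i=1}^d \{0,\ldots,M_{N_i}-1\}$. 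Using $\norm[0]{\phi_\mathcal{I}}_{\rkhs^d} = \lambda_\mathcal{I}^{-1/2}$, the Cauchy--Schwarz inequality then yields, exactly as in \Cref{eq:truncation-bound},
\begin{equation*}
\abs[0]{\mu^d(f) - Q_\mathcal{N}^d(f)} \leq \norm[0]{f}_{\rkhs^d} \sum_{\mathcal{I} \notin B} \lambda_\mathcal{I}^{1/2} \abs[0]{\mu^d(\phi_\mathcal{I}) - Q_\mathcal{N}^d(\phi_\mathcal{I})},
\end{equation*}
and it remains to estimate this tail over the complement of $B$.

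The crucial step is to control the tail without incurring the runaway factor $\neper^{\sum_i M_{N_i}/\beta_i^2}$ that a global bound on $\abs[0]{Q_\mathcal{N}^d(\phi_\mathcal{I})}$ would create. To avoid this I would split $B^c$ according to which coordinate exceeds its exactness threshold: writing $E_j \coloneqq \Set{\mathcal{I}}{\mathcal{I}(j) \geq M_{N_j}}$ we have $B^c \subseteq \bigcup_{j=1}^d E_j$, and since all summands are nonnegative a union bound gives $\sum_{\mathcal{I} \notin B} \leq \sum_{j=1}^d \sum_{\mathcal{I} \in E_j}$. On each $E_j$ the summand factorises over coordinates, so the sum can be estimated one coordinate at a time, and the point is to keep every exponential growth factor paired with a matching power of eigenvalue decay.

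For the coordinatewise bounds (writing the $i$-th eigenvalues as $\lambda_n^{\alpha_i} = \tau_i \lambda_i^{n}$, as in the proof of \Cref{thm:convergence}) I would use $\abs[0]{\mu(\phi_n^{\alpha_i})} \leq 1$ from $L^2(\mu)$-orthonormality and, from \Cref{lemma:eigFuncBound} together with assumption (3) (read coordinatewise, with the $i$-th parameter $\beta_i$), the bound $\abs[0]{\phi_n^{\alpha_i}(x_{i,m})} \leq K \sqrt{\beta_i}\, \neper^{M_{N_i}/\beta_i^2}$, whence assumption (1) gives $\abs[0]{Q_{i,N_i}(\phi_n^{\alpha_i})} \leq W_\mathcal{N} K \sqrt{\beta_i}\, \neper^{M_{N_i}/\beta_i^2}$. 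Each coordinate factor of $\sum_{\mathcal{I} \in E_j} \lambda_\mathcal{I}^{1/2} \abs[0]{Q_\mathcal{N}^d(\phi_\mathcal{I})}$ is then a geometric series in $\sqrt{\lambda_i}$ in which the growth $\neper^{M_{N_i}/\beta_i^2}$ always meets at least $\lambda_i^{M_{N_i}/2}$ of decay: in the distinguished coordinate $j$ because $\mathcal{I}(j) \geq M_{N_j}$ there, and in every other coordinate by splitting its sum at $n = M_{N_i}$ (the part $n < M_{N_i}$ uses exactness and contributes a bounded factor $\bigO(1)$, while the part $n \geq M_{N_i}$ carries its own $\lambda_i^{M_{N_i}/2}$). \Cref{lemma:ConvConstant} with $\rho = 2$ ensures $\sqrt{\lambda_i}\, \neper^{1/\beta_i^2} < 1$, so each such paired block is genuinely $(\sqrt{\lambda_i}\, \neper^{1/\beta_i^2})^{M_{N_i}} < 1$; the $\mu^d$ part is handled identically and is even smaller since $\sqrt{\lambda_i} < \sqrt{\lambda_i}\, \neper^{1/\beta_i^2}$.

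Collecting the estimates, each of the $d$ coordinates contributes a factor $\bigO(W_\mathcal{N})$, and the distinguished coordinate additionally contributes $(\sqrt{\lambda_j}\, \neper^{1/\beta_j^2})^{M_{N_j}} \leq \eta^{M}$, where $\eta \coloneqq \max_{1 \leq i \leq d} \sqrt{\lambda_i}\, \neper^{1/\beta_i^2} < 1$ and $M = \min_i M_{N_i}$ (using $\lambda_i < 1$ and $M_{N_j} \geq M$). Summing over $j$ and absorbing lower powers of $W_\mathcal{N}$ via $W_\mathcal{N} \geq 1$ gives $\sum_{\mathcal{I} \notin B} \lambda_\mathcal{I}^{1/2} \abs[0]{\mu^d(\phi_\mathcal{I}) - Q_\mathcal{N}^d(\phi_\mathcal{I})} \leq C W_\mathcal{N}^d \eta^{M}$, and taking the supremum over $\norm[0]{f}_{\rkhs^d} \leq 1$ proves the claim; keeping track of the per-coordinate geometric-series prefactors records the explicit constants
\begin{equation}\label{eq:ConvConstantsMulti}
\eta = \max_{1 \leq i \leq d} \sqrt{\lambda_i}\, \neper^{1/\beta_i^2} \in (0,1) \qquad \text{and} \qquad C = d \prod_{i=1}^d c_i,
\end{equation}
where each $c_i > 0$ depends only on $\ell_i$. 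The main obstacle is exactly the bookkeeping of this coordinatewise pairing: one must resist bounding $Q_\mathcal{N}^d(\phi_\mathcal{I})$ globally, since the resulting factor $\neper^{\sum_i M_{N_i}/\beta_i^2}$ grows in every dimension and would overwhelm the single $\eta^M$ of decay that the minimal coordinate supplies; instead each dimension's exponential growth must be matched against its own geometric eigenvalue decay through \Cref{lemma:ConvConstant}.
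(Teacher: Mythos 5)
Your proposal is correct, and its analytic core coincides with the paper's: the product Mercer expansion plus Cauchy--Schwarz reduction to a tail sum over the complement of the exactness box, the bound $\abs[0]{Q_{i,N_i}(\phi_n)} \leq K \sqrt{\beta}\, W_\mathcal{N} \neper^{M_{N_i}/\beta^2}$ obtained from \Cref{lemma:eigFuncBound} together with assumptions (1) and (3) (this is \Cref{eq:PhiError}), and \Cref{lemma:ConvConstant} to guarantee $\eta = \sqrt{\lambda}\,\neper^{1/\beta^2} < 1$. Where you genuinely diverge is in how the tail is decomposed. The paper preserves the difference structure of $\mu^d(\phi_\mathcal{I}) - Q_\mathcal{N}^d(\phi_\mathcal{I})$ through a telescoping identity (\Cref{eq:MultiD-rec1}), expands it into a sum over the ``bad'' coordinates $\mathcal{B}_\mathcal{M}^1(\mathcal{I}) = \Set{i}{\mathcal{I}(i) \geq M_{N_i}}$, and then pairs exponential growth with eigenvalue decay via the pointwise inequality $M_{N_j} \leq \mathcal{I}(j)$ on those coordinates, which collapses everything into one global geometric sum $\sum_{\mathcal{I} \in \mathcal{A}_\mathcal{M}} \big(\sqrt{\lambda}\,\neper^{1/\beta^2}\big)^{\abs[0]{\mathcal{I}}}$. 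You instead take a union bound over which coordinate exceeds its threshold, discard the difference structure entirely via $\abs[0]{a-b} \leq \abs[0]{a} + \abs[0]{b}$ on each $E_j = \Set{\mathcal{I}}{\mathcal{I}(j) \geq M_{N_j}}$, and factorise the resulting sums coordinate by coordinate, pairing growth with decay separately in each coordinate by splitting at $n = M_{N_i}$, where exactness caps the low modes at $\abs[0]{\mu(\phi_n)} \leq 1$. Both routes avoid the fatal global bound you rightly flag as the main pitfall. Yours trades the paper's recursive bookkeeping for overlapping (hence slightly lossier) index sets, is arguably more transparent, yields constants of the same quality ($C\, W_\mathcal{N}^d\, \eta^M$ with $C$ equal to $d$ times a product of per-dimension factors, compare \Cref{eq:ConvConstantsMulti}), and accommodates dimension-dependent length-scales $\ell_i$ (hence $\beta_i$, $\lambda_i$, $\tau_i$) naturally, whereas the paper's statement and proof are written with a single $\beta$, $\lambda$, and $\tau$ throughout.
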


\begin{proof} 
The proof is largely analogous to that of \Cref{thm:convergence}. Since $f \in \rkhs^d$ can be written as
\begin{equation*}
f = \sum_{\mathcal{I} \geq 0} \lambda_\mathcal{I} \inprod{f}{\phi_\mathcal{I}}_{\rkhs^d} \phi_{\mathcal{I}},
\end{equation*}
by defining the index set
\begin{equation*}
\mathcal{A}_\mathcal{M} \coloneqq \Set[\big]{\mathcal{I} \in \N^d}{\mathcal{I}(i) \geq M_{N_i} \text{ for at least one $i \in \{1,\ldots,d\}$} } \subset \N^d
\end{equation*}
we obtain
\begin{equation*}
\abs[0]{\mu^d(f) - Q_\mathcal{N}^d(f)} = \abs[3]{\sum_{\mathcal{I} \in \mathcal{A}_{\mathcal{M}}} \lambda_\mathcal{I} \inprod{f}{\phi_\mathcal{I}}_{\rkhs^d} \big[ \mu^d(\phi_{\mathcal{I}}) - Q_\mathcal{N}^d(\phi_{\mathcal{I}}) \big] }.
\end{equation*}
Consequently, the Cauchy--Schwarz inequality yields
\begin{equation}\label{eq:MultiFBound}
\begin{split}
\abs[0]{\mu^d(f) - Q_\mathcal{N}^d(f)} &\leq \norm[0]{f}_{\rkhs^d} \sum_{\mathcal{I} \in \mathcal{A}_{\mathcal{M}}} \lambda_\mathcal{I}^{1/2} \abs[1]{ \mu^d(\phi_{\mathcal{I}}) - Q_\mathcal{N}^d(\phi_{\mathcal{I}}) } \\
&= \norm[0]{f}_{\rkhs^d} \tau^{d/2} \sum_{\mathcal{I} \in \mathcal{A}_{\mathcal{M}}} \lambda^{\abs[0]{\mathcal{I}}/2} \abs[1]{ \mu^d(\phi_{\mathcal{I}}) - Q_\mathcal{N}^d(\phi_{\mathcal{I}}) },
\end{split}
\end{equation}
where we again use the notation
\begin{equation*}
\tau = \sqrt{ \frac{1/2}{1/2+\delta^2 + \epsilon^2} } \hspace{1cm} \text{ and } \hspace{1cm} \lambda = \frac{\epsilon^2}{1/2 + \delta^2 + \epsilon^2}.
\end{equation*}

Since $\mu(\phi_n) \leq 1$ for any $n \geq 0$, integration error for the eigenfunction $\phi_\mathcal{I}$ satisfies
\begin{equation}\label{eq:MultiD-rec1}
\begin{split}
\abs[1]{ \mu^d(\phi_{\mathcal{I}}) - Q_\mathcal{N}^d(\phi_{\mathcal{I}}) } \hspace{-2cm}& \\
={}& \abs[3]{ \prod_{i=1}^d \mu( \phi_{\mathcal{I}(i)} ) - \prod_{i=1}^d Q_{i,N_i}( \phi_{\mathcal{I}(i)} ) } \\
={}& \Bigg\lvert \big[ \mu(\phi_{\mathcal{I}(d)}) - Q_{d,N_d}(\phi_{\mathcal{I}(d)})  \big] \prod_{i=1}^{d-1} \mu( \phi_{\mathcal{I}(i)} ) \bigg. \\
& \bigg. + Q_{d,N_d}(\phi_{\mathcal{I}(d)}) \bigg( \prod_{i=1}^{d-1} \mu( \phi_{\mathcal{I}(i)} ) - \prod_{i=1}^{d-1} Q_{i,N_i}( \phi_{\mathcal{I}(i)} ) \bigg) \Bigg\rvert \\
\leq{}& \abs[1]{\mu(\phi_{\mathcal{I}(d)}) - Q_{d,N_d}(\phi_{\mathcal{I}(d)})} \\
&+ \abs[1]{Q_{d,N_d}(\phi_{\mathcal{I}(d)})} \abs[3]{ \prod_{i=1}^{d-1} \mu( \phi_{\mathcal{I}(i)} ) - \prod_{i=1}^{d-1} Q_{i,N_i}( \phi_{\mathcal{I}(i)} ) }.
\end{split}
\end{equation}
\rev{
Define the index sets $\mathcal{B}_\mathcal{M}^j(\mathcal{I}) = \Set{ j \leq i \leq d }{ \mathcal{I}(i) \geq M_{N_i}}$ and their cardinalities $b_\mathcal{M}^j(\mathcal{I}) = \#\mathcal{B}_\mathcal{M}^j(\mathcal{I}) \leq d-j+1$ for $j \geq 1$.
Because $\abs[0]{\mu(\phi_{\mathcal{I}(i)}) - Q_{i,N_i}(\phi_{\mathcal{I}(i)})} = 0$ and $\abs[0]{Q_{i,N_{i}}(\phi_{\mathcal{I}(i)})} = \abs[0]{\mu(\phi_{\mathcal{I}(i)})} \leq 1$ if $\mathcal{I}(i) < M_{N_i}$, expansion of the recursive inequality~\eqref{eq:MultiD-rec1} gives
\begin{equation}\label{eq:Multi-rec-expanded}
\begin{split}
\abs[1]{ \mu^d(\phi_{\mathcal{I}}) - Q_\mathcal{N}^d(\phi_{\mathcal{I}}) } \hspace{-2cm}& \\
&\leq \sum_{i=1}^d \abs[1]{\mu(\phi_{\mathcal{I}(i)}) - Q_{i,N_i}(\phi_{\mathcal{I}(i)})} \prod_{j=i+1}^{d} \abs[1]{Q_{j,N_{j}}(\phi_{\mathcal{I}(j)})} \\
&= \sum_{i \in \mathcal{B}_\mathcal{M}^1(\mathcal{I})} \abs[1]{\mu(\phi_{\mathcal{I}(i)}) - Q_{i,N_i}(\phi_{\mathcal{I}(i)})} \prod_{j=i+1}^{d} \abs[1]{Q_{j,N_{j}}(\phi_{\mathcal{I}(j)})} \\
&\leq \sum_{i \in \mathcal{B}_\mathcal{M}^1(\mathcal{I})} \abs[1]{\mu(\phi_{\mathcal{I}(i)}) - Q_{i,N_i}(\phi_{\mathcal{I}(i)})} \prod_{j \in \mathcal{B}_\mathcal{M}^{i+1}(\mathcal{I})} \abs[1]{Q_{j,N_{j}}(\phi_{\mathcal{I}(j)})}.
\end{split}
\end{equation}
\Cref{eq:PhiError} provides the bounds \sloppy{${\abs[0]{\mu(\phi_{\mathcal{I}(i)}) - Q_{i,N_i}(\phi_{\mathcal{I}(i)})} \leq 1 + K \sqrt{\beta} W_{\mathcal{N}} \neper^{M_{N_i}/\beta^2}}$} and $\abs[0]{Q_{i,N_i}(\phi_{\mathcal{I}(i)})} \leq K \sqrt{\beta} W_\mathcal{N} \neper^{M_{N_i}/\beta^2}$ for the constant $K = 1.087$ that, when plugged in \Cref{eq:Multi-rec-expanded}, yield
\begin{equation}\label{eq:MultiPhiIBound}
\begin{split}
\abs[1]{ \mu^d(\phi_{\mathcal{I}}) - Q_\mathcal{N}^d(\phi_{\mathcal{I}}) } \hspace{-2.8cm}& \\
&\leq \sum_{i \in \mathcal{B}_\mathcal{M}^1(\mathcal{I})} \big( 1 + K \sqrt{\beta} W_{\mathcal{N}} \neper^{M_{N_i}/\beta^2} \big) \prod_{j \in \mathcal{B}_\mathcal{M}^{i+1}(\mathcal{I})} K \sqrt{\beta} W_\mathcal{N} \neper^{M_{N_j}/\beta^2} \\
&= \sum_{i \in \mathcal{B}_\mathcal{M}^1(\mathcal{I})} \big( 1 + K \sqrt{\beta} W_{\mathcal{N}} \neper^{M_{N_i}/\beta^2} \big) \big(K \sqrt{\beta} W_\mathcal{N} \big)^{b_\mathcal{M}^{i+1}(\mathcal{I})} \exp\Bigg( \frac{1}{\beta^2} \sum_{ j \in \mathcal{B}_\mathcal{M}^{i+1}(\mathcal{I}) } M_{N_j} \Bigg) \\
&\leq 2 \sum_{i \in \mathcal{B}_\mathcal{M}^1(\mathcal{I})} \big(K \sqrt{\beta} W_\mathcal{N} \big)^{b_\mathcal{M}^{i}(\mathcal{I})} \exp\Bigg( \frac{1}{\beta^2} \sum_{ j \in \mathcal{B}_\mathcal{M}^{i}(\mathcal{I}) } M_{N_j} \Bigg),
\end{split}
\end{equation}
where the last inequality is based on the facts that $i \in \mathcal{B}_\mathcal{M}^i(\mathcal{I})$ if \sloppy{${i \in \mathcal{B}_\mathcal{M}^1(\mathcal{I})}$} and \sloppy{${1 + K \sqrt{\beta} W_{\mathcal{N}} \neper^{M_{N_i}/\beta^2} \leq 2 K \sqrt{\beta} W_{\mathcal{N}} \neper^{M_{N_i}/\beta^2}}$}, a consequence of \sloppy{${K, \beta, W_\mathcal{N} \geq 1}$}.
\Cref{eq:MultiPhiIBound,eq:MultiFBound}, together with \Cref{lemma:ConvConstant}, now yield
\begin{align*}
\lvert &\mu^d(f) - Q_\mathcal{N}^d(f) \rvert \\
&\leq 2 \norm[0]{f}_{\rkhs^d} \tau^{d/2} \sum_{\mathcal{I} \in \mathcal{A}_{\mathcal{M}}} \lambda^{\abs[0]{\mathcal{I}}/2} \sum_{ i \in \mathcal{B}_\mathcal{M}^1(\mathcal{I})} \big(K \sqrt{\beta} W_\mathcal{N} \big)^{b_\mathcal{M}^i(\mathcal{I})} \exp\Bigg( \frac{1}{\beta^2} \sum_{ j \in \mathcal{B}_\mathcal{M}^i(\mathcal{I}) } M_{N_j} \Bigg) \\
&\leq 2 \norm[0]{f}_{\rkhs^d} \tau^{d/2} \big(K \sqrt{\beta} W_\mathcal{N} \big)^d \sum_{\mathcal{I} \in \mathcal{A}_{\mathcal{M}}} \lambda^{\abs[0]{\mathcal{I}}/2} \sum_{ i \in \mathcal{B}_\mathcal{M}^1(\mathcal{I})} \exp\Bigg( \frac{1}{\beta^2} \sum_{ j \in \mathcal{B}_\mathcal{M}^i(\mathcal{I}) } M_{N_j} \Bigg) \\
&\leq 2d \norm[0]{f}_{\rkhs^d} \tau^{d/2} \big(K \sqrt{\beta} W_\mathcal{N} \big)^d \sum_{\mathcal{I} \in \mathcal{A}_{\mathcal{M}}} \lambda^{\abs[0]{\mathcal{I}}/2} \neper^{\abs[0]{\mathcal{I}} / \beta^2} \\
&= 2d \norm[0]{f}_{\rkhs^d} \big(K \sqrt{\tau \beta} W_\mathcal{N} \big)^d \sum_{\mathcal{I} \in \mathcal{A}_{\mathcal{M}}} \big(\sqrt{\lambda} \neper^{1/\beta^2}\big)^{\abs[0]{\mathcal{I}}} \\
&\leq 2d \norm[0]{f}_{\rkhs^d} \big(K \sqrt{\tau \beta} W_\mathcal{N} \big)^d \big(\sqrt{\lambda} \neper^{1/\beta^2}\big)^M \sum_{\mathcal{I} \geq 0} \big(\sqrt{\lambda} \neper^{1/\beta^2}\big)^{\abs[0]{\mathcal{I}}} \\
&= 2d \norm[0]{f}_{\rkhs^d} \big(K \sqrt{\tau \beta} W_\mathcal{N} \big)^d \big(\sqrt{\lambda} \neper^{1/\beta^2}\big)^M \bigg( \frac{1}{1-\sqrt{\lambda} \neper^{1/\beta^2}} \bigg)^d.
\end{align*}
The claim therefore holds with
\begin{equation} \label{eq:ConvConstantsMulti}
C = 2d \bigg( \frac{K \sqrt{\tau \beta} }{1-\sqrt{\lambda} \neper^{1/\beta^2}} \bigg)^d \quad \text{ and } \quad \eta = \sqrt{\lambda} \neper^{1/\beta^2} < 1.
\end{equation}
}
\qed
\end{proof}

A multivariate version of \Cref{thm:ConvergenceSpecific} is obvious.

\section{Numerical experiments}\label{sec:numsim}

This section contains numerical experiments on properties and accuracy of the approximate Gaussian kernel quadrature weights defined in \Cref{thm:main,thm:mainMultiD}. The experiments have been implemented in MATLAB, and they are available at \texttt{https://github.com/tskarvone/gauss-mercer}. The value $\alpha = 1/\sqrt{2}$ is used in all experiments. The experiments indicate that
\begin{enumerate}
\item Computation of the approximate weights in \Cref{eq:approximation} is numerically stable.
\item The weight approximation is quite accurate, its accuracy increasing with the number of nodes and the length-scale, as predicted in \Cref{sec:lengthscale}.
\item The weights $w_{k,n}$ and $\widetilde{w}_{k,n}$ are positive for every $N$ and $n=1,\ldots,N$ and their sums converge to one exponentially in $N$.
\item The quadrature rule $\widetilde{Q}_{k}$ converges exponentially, as implied by \Cref{thm:ConvergenceSpecific} and empirical observations on the behaviour of its weights.
\item In numerical integration of specific functions, the approximate kernel quadrature rule $\widetilde{Q}_{k}$ can achieve integration accuracy almost indistinguishable from that of the corresponding Gaussian kernel quadrature rule $Q_{k}$ and superior to some more traditional alternatives.
\end{enumerate}
This suggest~\Cref{eq:approximation} can be used as an accurate and numerically stable surrogate for computing the Gaussian kernel quadrature weights when the naive approach based on solving the linear system~\eqref{eq:linSys} is precluded by ill-conditioning of the kernel matrix. Furthermore, the choice~\eqref{eq:nodes} of the nodes by scaling the Gauss--Hermite nodes appears to yield an exponentially convergent kernel quadrature rule that has positive weights.

\begin{figure}[t!]
  \centering
  \includegraphics[width=\textwidth]{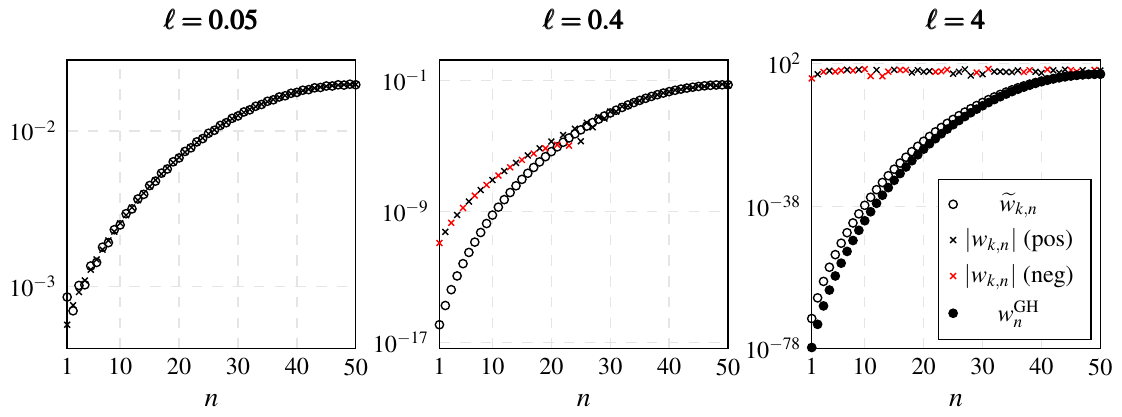}
  \caption{\rev{Absolute kernel quadrature weights, as computed directly from the linear system~\eqref{eq:linSys}, and the approximate weights~\eqref{eq:approximation} for $N = 99$, nodes $\tilde{x}_{k,n}$, and three different length-scales. Red is used to indicate those of $w_{k,n}$ that are negative. The nodes are in ascending order, so by symmetry it is sufficient to display weights only for $n=1,\ldots,50$ (in fact, $w_{k,n}$ are not necessarily numerically symmetric; see \Cref{sec:numsim-approximation}). The Gauss--Hermite nodes and weights were computed using the Golub--Welsch algorithm~\citep[Section~3.1.1.1]{Gautschi2004} and MATLAB's variable precision arithmetic. \Cref{eq:approximation} did not present any numerical issues as the sum, which can contain both positive and negative terms, was always dominated by the positive terms and all its terms were of reasonable magnitude.}}\label{fig:stability}
\end{figure}

\subsection{Numerical stability and distribution of weights}\label{sec:numsim-stability}

\rev{
We have not encountered any numerical issues when computing the approximate weights~\eqref{eq:approximation}. In this example we set $N=99$ and examine the distribution of approximate weights $\widetilde{w}_{k,n}$ for $\ell = 0.05$, $\ell = 0.4$ and $\ell = 4$. \Cref{fig:stability} depicts (i) approximate weights $\widetilde{w}_{k,n}$, (ii) absolute kernel quadrature weights $\abs[0]{w_{k,n}}$  obtained by solving the linear system~\eqref{eq:linSys} for the points $\tilde{x}_{n}$ and, for $\ell = 4$, (iii) Gauss--Hermite weights $w_n^\textsm{GH}$. The approximate weights $\widetilde{w}_{k,n}$ display no signs of numerical instabilities; their magnitudes vary smoothly and all of them are positive. That $\widetilde{w}_{k,1} > \widetilde{w}_{k,2}$ for $\ell = 0.05$ appears to be caused by the sum in \Cref{eq:approximation} having not converged yet: the constant $2\alpha^2\beta^2/(1+2\delta^2) - 1$, that controls the rate of convergence of this sum, converges to $1$ as $\ell \to 0$ (in this case its value is $0.9512$) and $H_{2m}(x_1^\textsm{GH}) > 0$ for every $m = 1,\ldots,49$ while $H_{2m}(x_n^\textsm{GH}) < 0$ for $m = 46,47,48,49$. This and further experiments in \Cref{sec:numsim-approximation} merely illustrates that quality of the weight approximation deteriorates when $\ell$ is small---as predicted in \Cref{sec:lengthscale}.
Behaviour of $\widetilde{w}_{k,n}$ is in stark contrast to the naively computed weights $w_{k,n}$ that display clear signs of numerical instabilities for $\ell = 0.4$ and $\ell = 4$ (condition numbers of the kernel matrices were roughly $2.66 \times 10^{16}$ and $3.59 \times 10^{18}$).
Finally, the case $\ell = 4$ provides further evidence for numerical stability of \Cref{eq:approximation} since, based on \Cref{sec:lengthscale}, $\widetilde{w}_{k,n} \to w_n^\textsm{GH}$ as $\ell \to \infty$ and, furthermore, there is reason to believe that $w_{k,n}$ would share this property if they were computed in arbitrary-precision arithmetic. \Cref{sec:numsim-positivity} and the experiments reported by \citet{FasshauerMcCourt2012} provide additional evidence for numerical stability of \Cref{eq:approximation}.
}

\subsection{Accuracy of the weight approximation}\label{sec:numsim-approximation}

\begin{figure}[t!]
  \centering
  \includegraphics[width=\textwidth]{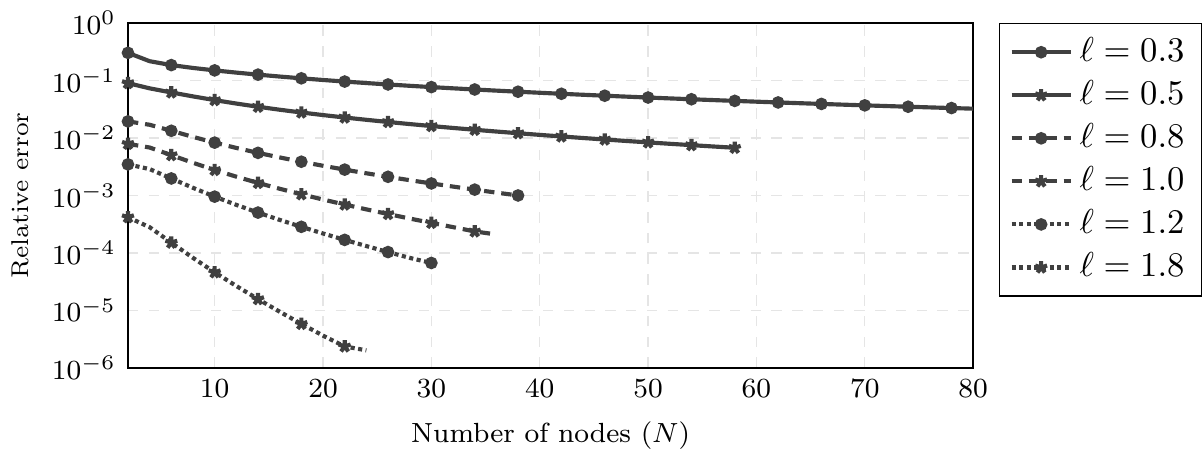}
  \caption{Relative weight approximation error~\eqref{eq:RelErr} for different length-scales.}\label{fig:accuracy}
\end{figure}

Next we assess quality of the weight approximation $\widetilde{w}_{k} \approx w_{k}$. \Cref{fig:accuracy} depicts the results for a number of different length-scales in terms of norm of the relative weight error,
\begin{equation}\label{eq:RelErr}
\sqrt{ \sum_{n=1}^N \bigg( \frac{w_{k,n} - \widetilde{w}_{k,n}}{w_{k,n}} \bigg)^2 }.
\end{equation}
As the kernel matrix quickly becomes ill-conditioned, computation of the kernel quadrature weights $w_{k}$ is challenging, particularly when the length-scale is large. To partially mitigate the problem we replaced the kernel quadrature weights with their QR decomposition approximations $\widetilde{w}_k^M$ derived in \Cref{sec:qr}. The truncation length~$M$ was selected based on machine precision; see \citep[Section 4.2.2]{FasshauerMcCourt2012} for details. Yet even this does not work for large enough $N$. Because kernel quadrature rules on symmetric point sets have symmetric weights~\citep[Section 5.2.4]{KarvonenSarkka2018,Oettershagen2017}, breakdown in symmetricity of the computed kernel quadrature weights was used as a heuristic proxy for emergence of numerical instability: for each length-scale, relative errors are presented in \Cref{fig:accuracy} until the first $N$ such that $\abs[0]{1-w_{k,N}/w_{k,1}} > 10^{-6}$, ordering of the nodes being from smallest to the largest so that $w_{k,N} = w_{k,1}$ in absence of numerical errors.

\subsection{Properties of the weights}\label{sec:numsim-positivity}

\Cref{fig:weights} shows the minimal weights $\min_{n=1,\ldots,N} \widetilde{w}_{k,n}$ and convergence to one of $\sum_{n=1}^N \abs[0]{\widetilde{w}_{k,n}}$ for a number of different length-scales. These results provide strong numerical evidence for the conjecture that $\widetilde{w}_{k,n}$ remain positive and that the assumptions of \Cref{thm:ConvergenceSpecific} hold. Exact weights, as long as they can be reliably computed (see \Cref{sec:numsim-approximation}), exhibit behaviour practically indistinguishable from the approximate ones and are not therefore depicted separately in \Cref{fig:weights}.

\begin{figure}[t!]
  \centering
  \includegraphics[width=\textwidth]{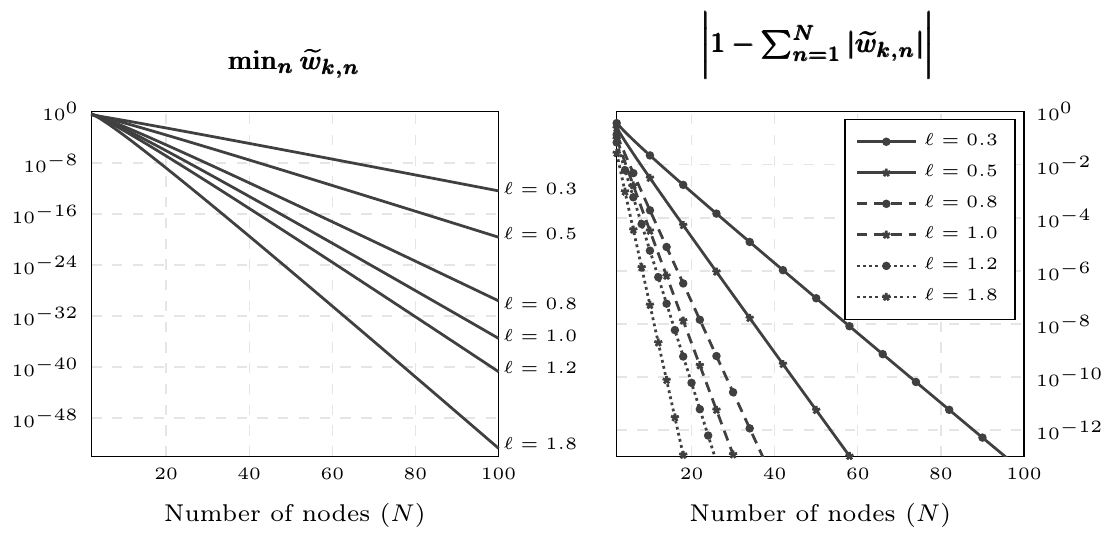}
  \caption{Minimal weights and convergence to one of the the sum of absolute values of the weights for six different length-scales.}\label{fig:weights}
\end{figure}

\subsection{Worst-case error}\label{sec:experiment-wce}

The worst-case error $e(Q)$ of a quadrature rule $Q(f) = \sum_{n=1}^N w_n f(x_n)$ in a reproducing kernel Hilbert space induced by the kernel $k$ is explicitly computable:
\begin{equation}\label{eq:WCE-explicit}
e(Q)^2 = \mu(k_\mu) + \sum_{n,m = 1}^N w_n w_m k(x_n,x_m) - 2 \sum_{n=1}^N w_n k_\mu(x_n).
\end{equation}
\Cref{fig:wce} compares the worst-case errors in the RKHS of the Gaussian kernel for six different length-scales of (i) the classical Gauss--Hermite quadrature rule, (ii) the quadrature $\widetilde{Q}_{k}(f) = \sum_{n=1}^N \widetilde{w}_{k,n} f(\tilde{x}_n)$ of \Cref{thm:main}, and (iii) the kernel quadrature rule with its nodes placed uniformly between the largest and smallest of $\tilde{x}_n$. We observe that $\widetilde{Q}_{k}$ is, for all length-scales, the fastest of these rules to converge (the kernel quadrature rule at $\tilde{x}_n$ yields WCEs practically indistinguishable from those of $\widetilde{Q}_{k}$ and is therefore not included). It also becomes apparent that the convergence rates derived in \Cref{thm:convergence,thm:ConvergenceSpecific} for $\widetilde{Q}_{k}$ are rather conservative. For example, for $\ell = 0.2$ and $\ell = 1$ the empirical rates are $e(\widetilde{Q}_{k}) = \mathcal{O}(\neper^{-c N})$ with $c \approx 0.21$ and $c \approx 0.98$, respectively, whereas \Cref{eq:ConvConstants} yields the theoretical values $c \approx 0.00033$ and $c \approx 0.054$, respectively.

\begin{figure}[t!]
  \centering
  \includegraphics[width=\textwidth]{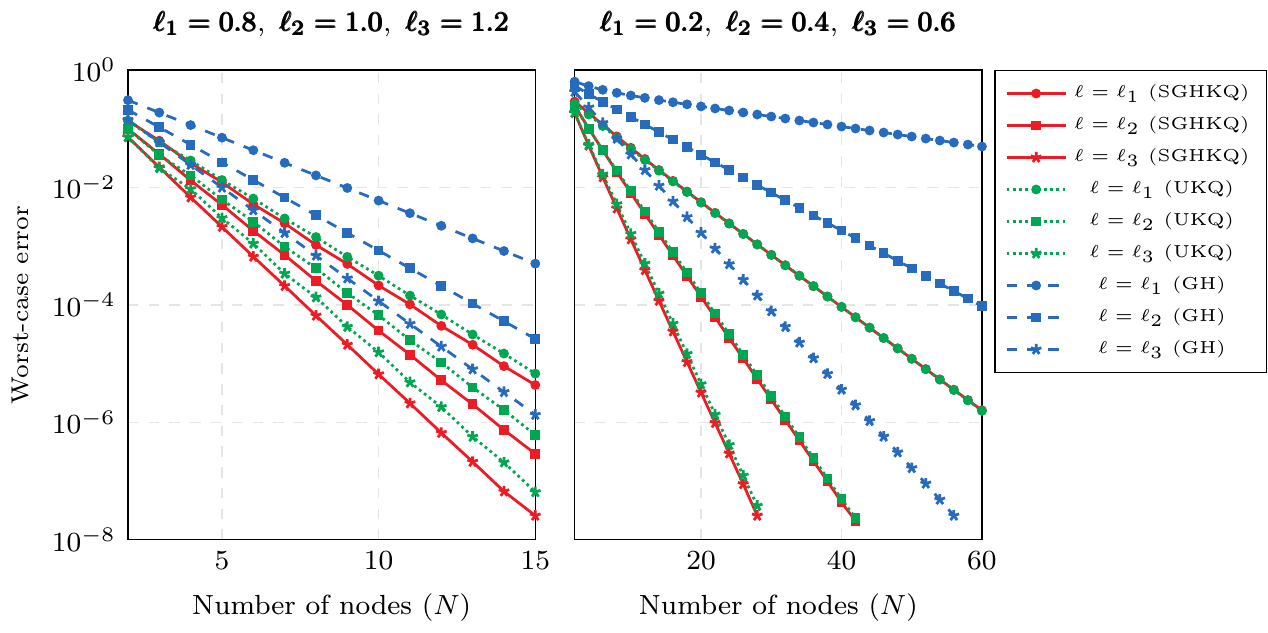}
  \caption{Worst-case errors~\eqref{eq:WCE-explicit} in the Gaussian RKHS as functions of the number of nodes of the quadrature rule of \Cref{thm:main} (SGHKQ), the kernel quadrature rule with nodes placed uniformly between the largest and smallest of $\tilde{x}_n$ (UKQ), and the Gauss--Hermite rule (GH). WCEs are displayed until the square root of floating-point relative accuracy ($\approx 1.4901 \times 10^{-8}$) is reached.}\label{fig:wce}
\end{figure}

\subsection{Numerical integration}

Set $\ell = 1.2$ and consider the integrand
\begin{equation}\label{eq:test-function}
f(x) = \prod_{i=1}^d \exp\bigg( \! -\frac{c_i x^2}{2\ell^2} \bigg) x^{m_i}.
\end{equation}
When $0 < c_i < 4$ and $m_i \in \N$ for each $i=1,\ldots,d$, the function is in $\rkhs$~\citep[Theorems 1 and 3]{Minh2010}. Furthermore, the Gaussian integral of this function is available in closed form:
\begin{equation*}
(2\pi)^{-d/2} \int_{\R^d} f(x) \neper^{-\norm[0]{x}^2/2} \dif x = \prod_{i=1}^d \frac{m_i!}{2^{m_i/2} (m_i/2)!} \bigg( \frac{\ell}{\sqrt{c_i}} \bigg)^{m_i+1} \bigg( \frac{1}{1+\ell^2/c_i}\bigg)^{(m_i+1)/2}
\end{equation*}
when $m_i$ are even (when they are not even, the integral is obviously zero). \Cref{fig:int} shows integration error of the three methods (or, in higher dimensions, their tensor product versions) used in \Cref{sec:experiment-wce} and the kernel quadrature rule based on the nodes $\tilde{x}_n$ for (i) $d=1$, $m_1 = 6$, $c_1 = 3/2$ and (ii) $d=3$, $m_1=6$, $m_2 = 4$, $m_3 = 2$, $c_1 = 3/2$, $c_2 = 3$, $c_3 = 1/2$. As expected, there is little difference between $\widetilde{Q}_{k}$ and $Q_{k}$.

\begin{figure}[t!]
  \centering
  \includegraphics[width=\textwidth]{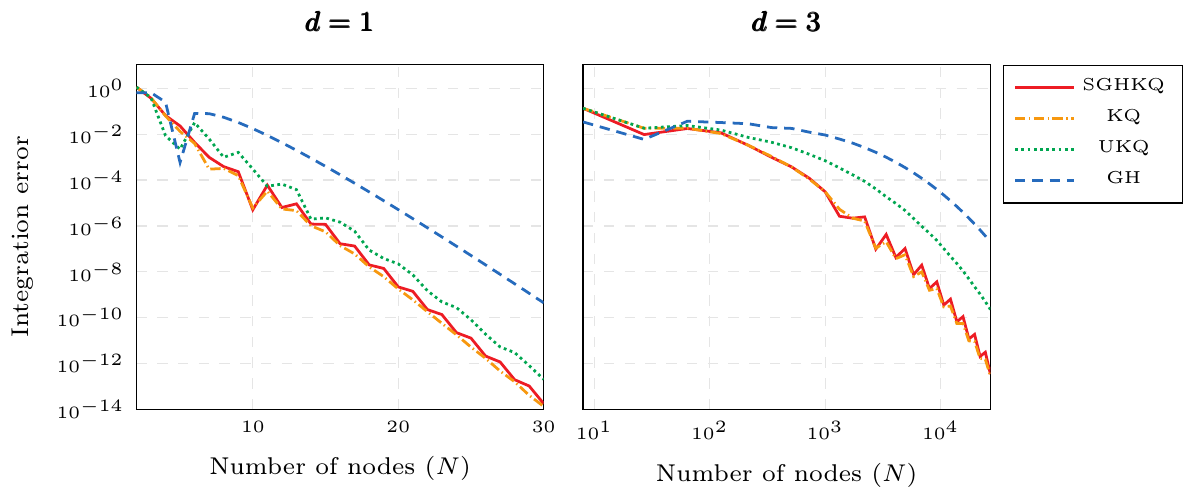}
  \caption{Error in computing the Gaussian integral of the function~\eqref{eq:test-function} in dimensions one and three using the quadrature rule of \Cref{thm:main} (SGHKQ), the corresponding kernel quadrature rule (KQ), the kernel quadrature rule with nodes placed uniformly between the largest and smallest of $\tilde{x}_n$ (UKQ), and the Gauss--Hermite rule (GH). Tensor product versions of these rules are used in dimension three.}\label{fig:int}
\end{figure}

%\begin{acknowledgements}
%If you'd like to thank anyone, place your comments here.
%\end{acknowledgements}

% BibTeX users please use one of
%\bibliographystyle{spbasic}      % basic style, author-year citations
%\bibliographystyle{spmpsci}      % mathematics and physical sciences
\bibliographystyle{apa}
%\bibliography{references}

\providecommand{\BIBYu}{Yu}

\end{document}